\long\def\alert#1{\parindent2em\smallskip\hbox to\hsize%
{\hskip\parindent\vrule%
\vbox{\advance\hsize-2\parindent\hrule\smallskip\parindent.4\parindent%
\narrower\noindent#1\smallskip\hrule}\vrule\hfill}\smallskip\parindent0pt}
 \newtheorem{thm}{Theorem}[section]
 \newtheorem{cor}[thm]{Corollary}
 \newtheorem{lem}[thm]{Lemma}
 \newtheorem{prop}[thm]{Proposition}
 \theoremstyle{definition}
 \theoremstyle{remark}
 \numberwithin{equation}{section}
\begin{document}

\title[nilpotent Lie algebra with the derived subalgebra of dimension two ]
{ Certain functors of nilpotent Lie algebra with the derived subalgebra of dimension two}

\author[F. Johari]{Farangis Johari}
\author[P. Niroomand]{Peyman Niroomand}

\address{Department of Pure Mathematics\\
Ferdowsi University of Mashhad, Mashhad, Iran}
\email{farangis.johari@mail.um.ac.ir, farangisjohary@yahoo.com}

\address{School of Mathematics and Computer Science\\
Damghan University, Damghan, Iran}
\email{niroomand@du.ac.ir}

\thanks{\textit{Mathematics Subject Classification 2010.} Primary 17B30; Secondary 17B05, 17B99.}

\keywords{Tensor square, exterior square, capability, Schur multiplier}

\date{\today}

\date{\today}


\begin{abstract}
By considering the nilpotent Lie algebra with the derived subalgebra of dimension $ 2$, we compute some functors include the Schur multiplier, the exterior square and the tensor square of these Lie algebras. We also give the corank of such Lie algebras.

 \end{abstract}

\maketitle

\section{Introduction and  Motivation and
Preliminaries}
Let $ F $ be an algebraic closed field and $ L $ be a nilpotent Lie algebra over $ F.$ Following \cite{ba1,2b, mon, ni, ni1,ni2, ni3}, the  Schur multiplier  of a Lie algebra $ L,$ $ \mathcal{M}(L),$ can be defined as $ \mathcal{M}(L)\cong R\cap F^2/[R,F]$ where $L\cong F/R $ for  a free Lie algebra $ F. $
There is a wide literature about Schur multiplier of $ L $ and the reader can find some more information on this topic for instance in \cite{ba1,2b, mon, ni, ni1,ni2, ni3}.

By looking \cite{elis1}, suppose $ L\otimes L $ is used to denote the tensor square of a Lie algebra $ L $. The exterior square $ L\wedge L $ is obtained from $ L\otimes L $ by imposing the additional relation $ l\otimes l=0 $ for all $ l\in L $ and the image of $ l\otimes l' $ is denoted by $ l\wedge l' $ for all $ l,l'\in L. $ From \cite{elis1}, let $ L\square L $ be the subalgebra of $ L\otimes L $ generated  by the elements $ l\otimes l $ for all $ l\in L. $ Clearly, $ L\wedge L\cong  L\otimes L /L\square L$ and by a famous result of Ellis (see Lemma \ref{j1}), the Schur multiplier of $ L $ is isomorphic to  the kernel of  the commutator map  $ \kappa' :l\wedge l_1 \in L\wedge L \longmapsto [l,l_1]\in L^2 $.

The goal of this paper is to give a complete description of the structure of some functors, such as  the Schur multiplier, the exterior square and the tensor square for a non-abelian nilpotent Lie algebra $ L $ with
the derived subalgebra of dimension at most $ 2. $

For a Lie algebra $L$ we denote
by $L^{(ab)}$ the factor Lie algebra $L/L^2,$ which is always abelian.
The following lemma gives the structure of Schur multiplier with respect to the direct sum of two Lie algebras.

\begin{lem}\label{1}
 \cite[Proposition 3]{ba1}
Let $ A$ and $ B $ be two Lie algebras. Then
\[ \mathcal{M}(A\oplus B)\cong   \mathcal{M}(A) \oplus  \mathcal{M}(B) \oplus (A^{(ab)}\otimes B^{(ab)}).\]
\end{lem}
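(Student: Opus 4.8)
The plan is to deduce the formula from the Künneth theorem for Lie algebra homology. First I would recall that the description $\mathcal{M}(L)\cong (R\cap F^2)/[R,F]$ used above is the Lie analogue of Hopf's formula, and that it identifies $\mathcal{M}(L)$ with the second homology $H_2(L)$ of $L$ with trivial coefficients in the ground field, the homology being computed from the Chevalley--Eilenberg complex $\Lambda^{\bullet} L$; in the same vein one has $H_0(L)\cong F$ and $H_1(L)\cong L/L^2 = L^{(ab)}$.

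Next I would use that the bracket on $A\oplus B$ has no mixed components, so the Chevalley--Eilenberg complex of $A\oplus B$ factors as the tensor product $\Lambda^{\bullet} A \otimes_F \Lambda^{\bullet} B$ of the complexes of $A$ and of $B$, viewed as complexes of $F$-vector spaces. Since $F$ is a field, the algebraic Künneth theorem applies with no $\mathrm{Tor}$ terms, giving $H_n(A\oplus B)\cong \bigoplus_{i+j=n} H_i(A)\otimes_F H_j(B)$. Taking $n=2$ and substituting $H_0\cong F$, $H_1\cong (-)^{(ab)}$, $H_2\cong \mathcal{M}(-)$, together with $F\otimes_F V \cong V$, yields
\[
\mathcal{M}(A\oplus B)\cong \mathcal{M}(A)\oplus (A^{(ab)}\otimes B^{(ab)})\oplus \mathcal{M}(B),
\]
which is the assertion.

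I expect the step needing the most care to be the clean identification of the Hopf-formula quotient with the homology group coming from the Chevalley--Eilenberg complex: this is classical for Lie algebras over a field, but it should be invoked explicitly rather than taken for granted. If instead one wishes to argue entirely inside the Hopf-formula framework, an alternative is to fix free presentations $A\cong F_1/R_1$ and $B\cong F_2/R_2$, form the free product $F=F_1\ast F_2$ so that $A\oplus B\cong F/R$ with $R=\langle R_1,R_2,[F_1,F_2]\rangle$, and compute $(R\cap F^2)/[R,F]$ directly; there the cross term $[F_1,F_2]$ is precisely what produces the summand $A^{(ab)}\otimes B^{(ab)}$, while $R_1$ and $R_2$ produce $\mathcal{M}(A)$ and $\mathcal{M}(B)$. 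This route is more elementary but the bookkeeping is heavier, so I would favour the homological argument; in either case the result is exactly \cite[Proposition 3]{ba1}.
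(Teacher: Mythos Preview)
Your argument is correct: the identification $\mathcal{M}(L)\cong H_2(L;F)$ via Hopf's formula, together with the K\"unneth decomposition of the Chevalley--Eilenberg complex of a direct sum, gives the formula cleanly, and your alternative free-product sketch is also a valid route. Note, however, that the paper does not supply any proof of this lemma at all---it simply quotes the result from \cite[Proposition~3]{ba1} as a known fact---so there is nothing to compare your argument against beyond observing that you have written out what the paper is content to cite.
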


We denote an abelian Lie algebra of dimension $n$ and Heisenberg Lie algebra by $A(n)$ and $H(m),$ respectively.
Schur multipliers of abelian and Heisenberg algebras are well known. (See for instance \cite[ Lemma 2.6]{ni3}.)
\begin{lem} \label{ss}
We have
\begin{itemize}
\item[$(i)$]
$\dim(\mathcal{M}(A(n))) = \dfrac{1}{2}
n(n -1),$
\item[$(ii)$]$\dim \mathcal{M}(H(1))=2,$
\item[$(iii)$]$\dim \mathcal{M}(H(m))=2m^2-m-1$ for all $ m\geq 2.$
\end{itemize}
\end{lem}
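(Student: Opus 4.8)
The three formulas all come from the Hopf\nobreakdash-type description $\mathcal{M}(L)\cong (R\cap F^2)/[R,F]$ recalled above, applied to the presentation $L\cong F/R$ in which $F$ is the free Lie algebra on a minimal generating set of $L$, combined with Witt's formula $\dim(F^i/F^{i+1})=\frac{1}{i}\sum_{d\mid i}\mu(d)\,n^{i/d}$ for the $i$\nobreakdash-th homogeneous component of the free Lie algebra on $n$ generators; in particular $\dim(F^2/F^3)=\frac{1}{2}n(n-1)=\dim\wedge^2(F/F^2)$ and $\dim(F^3/F^4)=\frac{1}{3}(n^3-n)$. In each of our three cases the number $n$ of free generators equals $\dim L^{(ab)}$, which forces $R\subseteq F^2$; hence $[R,F]\subseteq F^3$ and $R\cap F^2=R$, so everything reduces to locating $[R,F]$ inside $F^3$.

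For $(i)$ one has $A(n)\cong F/F^2$ on $n$ generators, so $R=F^2$, $[R,F]=[F^2,F]=F^3$, and $\mathcal{M}(A(n))\cong F^2/F^3$ is of dimension $\frac{1}{2}n(n-1)$; alternatively $(i)$ follows from Lemma \ref{1} by induction on $n$, using $A(n)\cong A(n-1)\oplus A(1)$, $A(n)^{(ab)}=A(n)$ and $\mathcal{M}(A(1))=0$. For $(ii)$, $H(1)$ is the unique non\nobreakdash-abelian nilpotent Lie algebra of dimension $3$, which is exactly the free class\nobreakdash-$2$ nilpotent Lie algebra on two generators; hence $H(1)\cong F/F^3$ with $F$ free on two generators, so $R=F^3$, $[R,F]=F^4$, and $\mathcal{M}(H(1))\cong F^3/F^4$ has dimension $\frac{1}{3}(2^3-2)=2$.

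For $(iii)$ take $F$ free on $x_1,y_1,\dots,x_m,y_m$, set $V=F/F^2$, and let $z$ span $H(m)^2$. Then $H(m)\cong F/R$ with $R=S+F^3$, where $S\subseteq F^2$ is any subspace lifting the kernel $\bar S$ of the surjection $F^2/F^3\cong\wedge^2 V\to H(m)^2=\langle z\rangle$ sending $x_i\wedge y_i\mapsto z$ for all $i$ and every other basis wedge to $0$. Since $\dim\wedge^2 V=2m^2-m$ we get $\dim\bar S=2m^2-m-1$, and $R\cap F^2=R$ as above, so it remains to prove the key identity $[R,F]=F^3$. The inclusion $[R,F]\subseteq F^3$ is clear. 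For the reverse, $F^3/F^4$ is spanned by the brackets $[[u,v],w]$ with $u,v,w$ generators, and $[[u,v],w]\in[S,F]+F^4\subseteq[R,F]$ whenever $u\wedge v\in\bar S$, which holds as soon as $u$ and $v$ carry distinct indices. To absorb the remaining brackets $[[x_k,y_k],w]$: if $w\notin\{x_k,y_k\}$, one application of the Jacobi identity gives $[[x_k,y_k],w]=[[x_k,w],y_k]+[x_k,[y_k,w]]$, and both inner degree\nobreakdash-$2$ brackets now involve distinct indices, so both summands lie in $[S,F]+F^4$; if $w\in\{x_k,y_k\}$, use $m\ge 2$ to choose $l\ne k$ and replace $[x_k,y_k]$ by $[x_l,y_l]$, which is congruent to it modulo $S+F^3$ (their difference lies in $\bar S$), reducing to the previous case since $w\notin\{x_l,y_l\}$. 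Hence $[R,F]=F^3$ and $\mathcal{M}(H(m))\cong R/F^3$, of dimension $\dim\bar S=2m^2-m-1$.

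The only genuine obstacle is the identity $[R,F]=F^3$ in part $(iii)$, and this is exactly where the hypothesis $m\ge 2$ enters: when $m=1$ there is no relation $[x_i,y_i]=[x_j,y_j]$ available, $S$ is $0$, $R=F^3$, and $\dim\mathcal{M}(H(1))=\dim(F^3/F^4)=2$, which differs from the value $2\cdot 1^2-1-1=0$ of the formula in $(iii)$; this accounts for the separate statement in $(ii)$.
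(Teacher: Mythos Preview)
Your proof is correct. The paper itself does not prove this lemma at all: it simply records the three formulas as well known and refers the reader to \cite[Lemma~2.6]{ni3}. So there is no argument in the paper to compare against; what you have supplied is strictly more, namely a self-contained derivation from the Hopf description $\mathcal{M}(L)\cong (R\cap F^2)/[R,F]$ together with Witt's dimension formula for the graded pieces of a free Lie algebra. Your treatment of part $(iii)$ is the substantive point---showing $[R,F]=F^3$ by reducing every triple bracket $[[u,v],w]$ to ones whose inner degree-two factor lies in $\bar S$, via Jacobi and (crucially, using $m\ge 2$) the relation $[x_k,y_k]\equiv[x_l,y_l]$---and it is carried out cleanly. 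Your closing remark explaining why the formula $2m^2-m-1$ fails at $m=1$ and how this accounts for the separate case $(ii)$ is a nice touch that the paper (and most references) omit.
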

Recall that a Lie algebra $L$ is said to be capable if $L\cong H/Z(H)$ for a Lie algebra $H$.
The following lemma gives the structure of all  nilpotent Lie algebras and the capability of them  when the derived subalgebra has dimension one.
\begin{lem}\label{5} \cite[Theorem 3.5]{ni3}
if $ L $ be a nilpotent Lie algebra of dimension $ n $ such that $ \dim L^2=1, $ then $ L\cong H(m)\oplus A(n-2m-1) $
for some $ m\geq 1  $ and $ L $ is capable if and only if $m=1,  $ that is, $  L\cong H(1)\oplus A(n-3).$
\end{lem}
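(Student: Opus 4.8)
The plan is to treat the two assertions of the lemma separately: first the structural classification $L\cong H(m)\oplus A(n-2m-1)$, and then the capability statement, for which the easy direction is an explicit construction and the hard direction a Jacobi-identity computation.

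\emph{The structure of $L$.} Since $L$ is nilpotent and $\dim L^2=1$, the subspace $L^3=[L,L^2]$ lies in $L^2$ and is properly smaller (nilpotency forces the lower central series to descend strictly), so $L^3=0$; hence $L^2\subseteq Z(L)$ and $L$ has nilpotency class $2$. Fixing a generator $z$ of $L^2\cong F$, the bracket induces an alternating bilinear form $\beta\colon L/Z(L)\times L/Z(L)\to F$ whose radical is trivial by the very definition of $Z(L)$, so $\beta$ is nondegenerate; in particular $\dim\bigl(L/Z(L)\bigr)=2m$ is even, and $m\ge1$ because $L$ is non-abelian. Choosing a symplectic basis of $L/Z(L)$ and lifting it to $x_1,\dots,x_m,y_1,\dots,y_m\in L$ (with $[x_i,y_j]=\delta_{ij}z$ and the remaining brackets zero, after adjusting by central elements), and completing $z$ to a basis of $Z(L)$ by vectors spanning a subspace $A$ of dimension $\dim Z(L)-1=n-2m-1$, one checks at once that $\langle x_i,y_i,z\rangle\cong H(m)$, that $A$ is an abelian ideal, and that $L=\langle x_i,y_i,z\rangle\oplus A$. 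Thus $L\cong H(m)\oplus A(n-2m-1)$.

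\emph{Capability when $m=1$.} It suffices to produce a Lie algebra $H$ with $H/Z(H)\cong H(1)\oplus A(n-3)$. Take $H$ with basis $\{a,b,e,z_1,z_2\}\cup\{c_i,w_i:1\le i\le n-3\}$, nonzero brackets $[a,b]=e$, $[a,e]=z_1$, $[b,e]=z_2$, $[a,c_i]=w_i$, and with $z_1,z_2,w_1,\dots,w_{n-3}$ central. A routine check of the Jacobi identity shows $H$ is a Lie algebra, and a direct computation gives $Z(H)=\langle z_1,z_2,w_1,\dots,w_{n-3}\rangle$, so $H/Z(H)$ has basis $\bar a,\bar b,\bar e,\bar c_1,\dots,\bar c_{n-3}$ with the only nonzero bracket $[\bar a,\bar b]=\bar e$; that is, $H/Z(H)\cong H(1)\oplus A(n-3)$. (For $n=3$ this is just the free nilpotent Lie algebra of class $3$ on two generators.)

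\emph{Non-capability when $m\ge2$.} Suppose $L=H(m)\oplus A(n-2m-1)$ with $m\ge2$ were capable, say $L\cong H/Z(H)$. Then $\gamma_3(H)\subseteq Z(H)$, so $\gamma_4(H)=0$ and $H$ has class at most $3$. Lift a standard basis of the $H(m)$-summand to $\tilde x_1,\dots,\tilde x_m,\tilde y_1,\dots,\tilde y_m,\tilde z\in H$ and the $A$-generators to $\tilde s_1,\dots$; since every bracket of basis elements of $L$ lies in $L^2=\langle z\rangle$, each of $[\tilde x_i,\tilde y_j]-\delta_{ij}\tilde z$, $[\tilde x_i,\tilde x_j]$, $[\tilde y_i,\tilde y_j]$ and $[\tilde s_i,\tilde u]$ is central in $H$. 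Writing $\tilde z\equiv[\tilde x_k,\tilde y_k]\pmod{Z(H)}$ and expanding $[\,[\tilde x_k,\tilde y_k],\tilde x_i\,]$ by the Jacobi identity, one gets $[\tilde z,\tilde x_i]=\delta_{ki}[\tilde z,\tilde x_k]$ for all $i$, and symmetrically $[\tilde z,\tilde y_i]=\delta_{ki}[\tilde z,\tilde y_k]$, while the same expansion with $\tilde s_i$ in place of $\tilde x_i$ gives $[\tilde z,\tilde s_i]=0$. Because $m\ge2$ we may run this with two distinct values of $k$, and comparing the outcomes forces $[\tilde z,\tilde x_i]=[\tilde z,\tilde y_i]=0$ for every $i$; hence $\tilde z\in Z(H)$, contradicting $z\ne0$ in $H/Z(H)$. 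Therefore $L$ is capable if and only if $m=1$, i.e. $L\cong H(1)\oplus A(n-3)$.

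\emph{Main obstacle.} The only non-formal point is the last step: squeezing the relations $[\tilde z,\tilde x_i]=\delta_{ki}[\tilde z,\tilde x_k]$ out of the Jacobi identity and noticing that the existence of a \emph{second} hyperbolic pair — precisely the hypothesis $m\ge2$ — is what pulls $\tilde z$ into the centre; for $m=1$ no such collapse occurs, consistent with the extension built above. One could instead invoke the known value of the epicentre $Z^{*}$ of a Heisenberg algebra, but the direct commutator computation appears to be the cleanest self-contained route.
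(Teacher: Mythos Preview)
Your argument is correct, but there is nothing in the paper to compare it against: Lemma~\ref{5} is quoted verbatim from \cite[Theorem~3.5]{ni3} and is used without proof. What you have supplied is a self-contained proof of that cited result.

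A few comments on your write-up. The structural part is standard and clean; note that once you lift a symplectic basis of $L/Z(L)$ the bracket relations $[x_i,y_j]=\delta_{ij}z$, $[x_i,x_j]=[y_i,y_j]=0$ hold on the nose (the values are read off from the form), so no ``adjustment by central elements'' is actually needed. Your explicit cover for $m=1$ is fine and the Jacobi check goes through. The non-capability step is the substantive one, and your computation is right: from $\tilde z\equiv[\tilde x_k,\tilde y_k]\pmod{Z(H)}$ and the Jacobi identity one gets $[\tilde z,\tilde x_i]=\delta_{ki}[\tilde z,\tilde x_k]$, and the availability of a second index $k'\neq k$ when $m\ge2$ kills all the brackets $[\tilde z,\tilde x_i]$, $[\tilde z,\tilde y_i]$; together with $[\tilde z,\tilde s_j]=0$ this forces $\tilde z\in Z(H)$, a contradiction.

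The route taken in \cite{ni3} (and implicitly throughout the present paper) is different in flavour: there the capability question is handled via the exterior centre $Z^{\wedge}(L)$ and the Schur multiplier, using the dimension criterion of Proposition~\ref{dd} together with the known multipliers of $H(m)$ from Lemma~\ref{ss}. Your commutator computation bypasses that machinery entirely and is arguably the most elementary self-contained argument; the multiplier approach, on the other hand, fits more naturally into the framework the paper is building and generalises more readily.
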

The following result gives the Schur multiplier of  nilpotent Lie algebras  with the derived subalgebra of dimension  one.
\begin{prop}\label{d}
Let $ L $ be a nilpotent Lie algebra of dimension $ n $ such that $ \dim L^2=1 $. Then $L\cong H(m)\oplus A(n-2m-1).$
\begin{itemize}
\item[$(i)$] If $ m=1,$ then $ \dim\mathcal{M}(L)=\dfrac{1}{2}(n-1)(n-2)+1=\dfrac{1}{2}n(n-3)+2.$
\item[$(ii)$] If $ m\geq 2,$ then $ \dim\mathcal{M}(L)=\dfrac{1}{2}(n-1)(n-2)-1.$
\end{itemize}

\end{prop}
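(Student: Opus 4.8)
The plan is to reduce everything to Lemma~\ref{1}. The decomposition $L\cong H(m)\oplus A(n-2m-1)$ for some $m\geq 1$ is exactly the content of Lemma~\ref{5}, so it remains only to compute $\dim\mathcal{M}(L)$. Applying Lemma~\ref{1} with $A=H(m)$ and $B=A(n-2m-1)$ gives
\[
\mathcal{M}(L)\cong \mathcal{M}(H(m))\oplus\mathcal{M}(A(n-2m-1))\oplus\bigl(H(m)^{(ab)}\otimes A(n-2m-1)^{(ab)}\bigr),
\]
so that $\dim\mathcal{M}(L)$ is the sum of the three corresponding dimensions.

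The second summand is handled by Lemma~\ref{ss}$(i)$: since $\dim A(n-2m-1)=n-2m-1$, we have $\dim\mathcal{M}(A(n-2m-1))=\tfrac12(n-2m-1)(n-2m-2)$. For the third summand, note that $H(m)^{(ab)}=H(m)/H(m)^2$ is abelian of dimension $(2m+1)-1=2m$, while $A(n-2m-1)^{(ab)}=A(n-2m-1)$ has dimension $n-2m-1$; hence the tensor product has dimension $2m(n-2m-1)$. The first summand $\dim\mathcal{M}(H(m))$ is read off from Lemma~\ref{ss}: it equals $2$ when $m=1$ and $2m^2-m-1$ when $m\geq 2$, which is precisely why the statement splits into cases $(i)$ and $(ii)$.

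It then remains to add the three contributions and simplify. In case $(i)$ one obtains $2+\tfrac12(n-3)(n-4)+2(n-3)$, which collapses---after factoring $\tfrac12(n-3)$ out of the last two terms---to $\tfrac12 n(n-3)+2=\tfrac12(n-1)(n-2)+1$. In case $(ii)$ one obtains $(2m^2-m-1)+\tfrac12(n-2m-1)(n-2m-2)+2m(n-2m-1)$; combining the last two terms as $\tfrac12(n-2m-1)(n+2m-2)$ and expanding, the terms involving $m$ cancel and the total reduces to $\tfrac12 n(n-3)=\tfrac12(n-1)(n-2)-1$. Since the whole argument is just Lemma~\ref{1} together with already-known Schur multipliers, there is no real obstacle; the only point requiring attention is the correct value $\dim H(m)^{(ab)}=2m$ (so that the cross term is $2m(n-2m-1)$, not $(2m+1)(n-2m-1)$), together with the bookkeeping that makes the $m$-dependence disappear in case $(ii)$.
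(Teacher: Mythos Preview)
Your argument is correct: the decomposition from Lemma~\ref{5} together with the direct-sum formula of Lemma~\ref{1} and the values in Lemma~\ref{ss} yield exactly the stated dimensions, and your arithmetic in both cases is right (in particular the cancellation of the $m$-terms in case~$(ii)$ works as you describe).

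The paper, however, does not carry out this computation: its entire proof is a one-line citation of \cite[Theorem~3.1]{ni}, where the result is already established. Your route is genuinely different in that it is self-contained within the preliminaries assembled in Section~1, avoiding the external reference; this has the advantage of making the paper more self-sufficient and transparent about where the formula comes from. The paper's route is of course shorter, but it outsources the actual work. Both are legitimate; yours is arguably preferable pedagogically since all the ingredients (Lemmas~\ref{1}, \ref{ss}, \ref{5}) are already on the page precisely so that such computations can be done.
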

\begin{proof}
The result follows from \cite[Theorem 3.1]{ni}.
\end{proof}
The next lemma  shows the kernel of  the commutator map  $ \kappa' :L\wedge L \rightarrow L^2$  given by $l\wedge l_1\mapsto [l,l_1]$ is  isomorphic to the Schur multiplier of $L$.
\begin{lem}\cite[Theorem 35 $(iii)$]{elis1}\label{j1}
Let $ L $ be a Lie algebra. Then
$0\rightarrow \mathcal{M}(L)\rightarrow L\wedge L \xrightarrow{\kappa'} L^2\rightarrow 0$ is exact.
\end{lem}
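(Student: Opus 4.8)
The plan is to deduce the four-term sequence from a free presentation of $L$ together with the Hopf-type formula $\mathcal{M}(L)\cong (R\cap F^2)/[R,F]$ recalled in the Introduction; this is the Lie-algebra counterpart of the exactness of $H_2(G)\to G\wedge G\to [G,G]\to 1$ for groups. Fix a free Lie algebra $F$ and an ideal $R$ with $L\cong F/R$. Two observations are free: $\kappa'$ is surjective because $L^2$ is spanned by the brackets $[l,l_1]=\kappa'(l\wedge l_1)$, and the map $\mathcal{M}(L)\to L\wedge L$ produced below is injective by construction, so the whole content lies in identifying $\ker\kappa'$ with $\mathcal{M}(L)$.

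The crux is the presentation $L\wedge L\cong F^2/[R,F]$, which I would establish in three stages. First, the \emph{base case}: for a free Lie algebra $F$ the commutator map $\kappa'_F\colon F\wedge F\to F^2$ is an isomorphism. Surjectivity is clear, and injectivity is the vanishing of the Schur multiplier of a free Lie algebra; one checks it directly from the structure of free Lie algebras (e.g.\ via the Shirshov--Witt theorem, building an explicit inverse $F^2\to F\wedge F$). Second, \emph{functoriality}: the projection $F\to L$ induces an epimorphism $\alpha\colon F\wedge F\to L\wedge L$, which is onto because $L\wedge L$ is generated by the wedges of any generating set of $L$. Third, the \emph{kernel of $\alpha$}: bookkeeping with the defining relations of the non-abelian exterior square shows that $\ker\alpha$ is the subalgebra generated by the elements $r\wedge f$ with $r\in R$, $f\in F$. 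Combining these, since $\kappa'_F(r\wedge f)=[r,f]$ and such elements generate the ideal $[R,F]$, the isomorphism $\kappa'_F$ carries $\ker\alpha$ onto $[R,F]$, and therefore $L\wedge L\cong (F\wedge F)/\ker\alpha\cong F^2/[R,F]$.

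It remains to trace $\kappa'$ through this identification. Using in addition the natural isomorphism $L^2\cong F^2/(F^2\cap R)$, the map $\kappa'$ becomes the canonical projection $F^2/[R,F]\twoheadrightarrow F^2/(F^2\cap R)$, which is well defined because $[R,F]\subseteq F^2\cap R$; its kernel is $(F^2\cap R)/[R,F]$, which is exactly $\mathcal{M}(L)$. Hence $0\to \mathcal{M}(L)\to L\wedge L\xrightarrow{\kappa'}L^2\to 0$ is exact, and a standard comparison of free presentations shows the embedding $\mathcal{M}(L)\hookrightarrow L\wedge L$ is canonical. I expect the third stage above to be the main obstacle: it is the Lie-algebra analogue of the right-exactness of the non-abelian exterior product for the sequence $R\hookrightarrow F\twoheadrightarrow L$, and it requires handling the combinatorics of the defining relations of $L\wedge L$ carefully.
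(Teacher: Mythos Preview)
The paper does not prove this lemma at all: it is quoted verbatim as \cite[Theorem~35(iii)]{elis1} and used as a black box. So there is no ``paper's own proof'' to compare against; you have supplied an argument where the authors simply invoke a reference.

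That said, your sketch is essentially the standard route to this result (and is in the spirit of Ellis's original argument): identify $L\wedge L$ with $F^2/[R,F]$ via a free presentation, so that $\kappa'$ becomes the projection $F^2/[R,F]\to F^2/(R\cap F^2)$ with kernel $(R\cap F^2)/[R,F]=\mathcal{M}(L)$. The three stages you outline are the right decomposition. You have correctly flagged the genuine work: stage three, showing that the kernel of $F\wedge F\to L\wedge L$ is generated by the elements $r\wedge f$, is the Lie-algebra analogue of right-exactness of the nonabelian exterior product and is where the defining relations of $\wedge$ must be handled carefully. Stage one (that $\mathcal{M}(F)=0$ for free $F$) is immediate from the Hopf formula itself with $R=0$, so you need not appeal to Shirshov--Witt; the subtler point there is that $\kappa'_F$ is a Lie-algebra isomorphism, not merely a linear one, which you should make explicit since you later transport the ideal $\ker\alpha$ through it. With those two clarifications the argument goes through.
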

Let $ Z^{\wedge}(L) $ be the exterior center of $L$, which is defined in \cite{ni3}. It is known that $L$ is capable if and only if $Z^{\wedge}(L)=0$. The following lemma is a criterion for detecting the capable Lie algebras.
\begin{lem}\cite[Corollary 2.3]{ni3}\label{j2}
$N \subseteq Z^{\wedge}(L)$ if and only if the natural map $L \wedge L \rightarrow L/N \wedge L/N$ is an isomorphism.\end{lem}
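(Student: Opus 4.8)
The plan is to pass to a free presentation and reduce the claim to an explicit kernel computation. Fix a free Lie algebra $F$ with $L\cong F/R$, and recall the standard isomorphism $L\wedge L\cong F^2/[F,R]$; it is compatible with the exact sequence of Lemma \ref{j1}, since it carries $\mathcal{M}(L)=(R\cap F^2)/[F,R]$ onto $\ker\kappa'$ and induces $L^2\cong F^2/(R\cap F^2)$ on the quotient, and under it $\bar f\wedge\bar g$ corresponds to $[f,g]+[F,R]$. Since $N$ is an ideal of $L$, write $N=S/R$ where $S$ is an ideal of $F$ containing $R$; then $L/N\cong F/S$, hence $(L/N)\wedge(L/N)\cong F^2/[F,S]$, and the natural map $L\wedge L\to(L/N)\wedge(L/N)$ is identified with the canonical projection $F^2/[F,R]\twoheadrightarrow F^2/[F,S]$. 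This projection is obviously surjective, so the whole question is whether its kernel $[F,S]/[F,R]$ vanishes.

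Next I would describe this kernel inside $L\wedge L$. Picking a subspace $T$ of $F$ that maps onto $N\cong S/R$, we have $S=R+T$, hence $[F,S]=[F,R]+[F,T]$, so $[F,S]/[F,R]$ is spanned by the cosets $[f,t]+[F,R]$ with $f\in F$ and $t\in T$. Translating through the isomorphism above, these cosets are exactly the wedges $x\wedge n$ with $x\in L$ and $n\in N$, so $\ker\bigl(L\wedge L\to(L/N)\wedge(L/N)\bigr)$ equals the subspace $\langle\, x\wedge n : x\in L,\ n\in N\,\rangle$ of $L\wedge L$. Since the natural map is always onto, it is an isomorphism if and only if this subspace is zero, that is, if and only if $x\wedge n=0$ in $L\wedge L$ for every $x\in L$ and every $n\in N$. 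Recalling that $Z^{\wedge}(L)$ consists of those $l\in L$ with $l\wedge x=0$ for all $x\in L$, this last condition is precisely $N\subseteq Z^{\wedge}(L)$, and both implications follow at once.

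The main obstacle is the first step: one must carefully justify the isomorphism $L\wedge L\cong F^2/[F,R]$ together with its naturality in $L$, so that the epimorphism $L\to L/N$ really does induce the projection $F^2/[F,R]\to F^2/[F,S]$. This is where the explicit construction of the non-abelian exterior square is used; granting it, the rest is routine bookkeeping with generators of $[F,S]$ modulo $[F,R]$. If one prefers to avoid free presentations, an alternative is to set up, for the ideal $N$ of $L$, a right-exact sequence of the shape $N\wedge L\to L\wedge L\to(L/N)\wedge(L/N)\to 0$ and to identify the image of its first map with $\langle\, x\wedge n : x\in L,\ n\in N\,\rangle$; this again reduces the lemma to the triviality of the same subspace.
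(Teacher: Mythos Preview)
Your argument is correct. The paper itself does not supply a proof of this lemma at all: it is simply quoted as \cite[Corollary~2.3]{ni3}, so there is no ``paper's own proof'' to compare with beyond the reference. Your free-presentation computation is a clean way to see it, and your identification of the kernel of $F^2/[F,R]\twoheadrightarrow F^2/[F,S]$ with the span of the wedges $x\wedge n$ is exactly the right translation. The one technical ingredient you flag---the natural isomorphism $L\wedge L\cong F^2/[F,R]$ and its functoriality in $L$---is standard (it is contained in Ellis's paper \cite{elis1}, which the present paper already cites for Lemma~\ref{j1}), so you may safely invoke it. Your alternative sketch via a right-exact sequence $N\wedge L\to L\wedge L\to (L/N)\wedge(L/N)\to 0$ is in fact closer in spirit to how such results are typically proved in the source \cite{ni3}, where one works directly with the exterior square rather than through free presentations; both routes are equally valid here and lead to the same identification of the kernel.
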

\begin{lem}\label{15555}
 \cite[Theorem 2.7]{ni3}
Let $ A$ and $ B $ be two Lie algebras. Then
\[ (A\oplus B)\wedge (A\oplus B)\cong   (A\wedge A) \oplus  (B\wedge B) \oplus (A^{(ab)}\otimes B^{(ab)}).\]
\end{lem}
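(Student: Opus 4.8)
The plan is to exhibit mutually inverse Lie algebra homomorphisms between the two sides, following the pattern of the computation of the Schur multiplier in Lemma \ref{1} (and of the classical formula for the exterior square of a direct product of groups). Throughout write $L=A\oplus B$ and regard $A$ and $B$ as the canonical ideals of $L$, so that $[A,B]=0$.

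First I would construct a homomorphism $\phi$ from $(A\wedge A)\oplus(B\wedge B)\oplus(A^{(ab)}\otimes B^{(ab)})$ to $L\wedge L$. The inclusions $A\hookrightarrow L$ and $B\hookrightarrow L$ induce homomorphisms $A\wedge A\to L\wedge L$ and $B\wedge B\to L\wedge L$. For the third summand, consider the bilinear map $A\times B\to L\wedge L$, $(a,b)\mapsto a\wedge b$; since $[A,B]=0$, the defining relation $[x,y]\wedge z=x\wedge[y,z]-y\wedge[x,z]$ of the exterior square yields $[a,a']\wedge b=0$ and $a\wedge[b,b']=0$, so this map factors through $A^{(ab)}\otimes B^{(ab)}$. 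Using $[A,B]=0$ and the same relation once more, one checks that the images of the three pieces centralize one another inside $L\wedge L$ (for instance the bracket of $a_1\wedge a_2$ with $a\wedge b$ is $[[a_1,a_2],a]\wedge b+a\wedge[[a_1,a_2],b]=0$), so the three maps assemble into a single Lie homomorphism $\phi$ on the direct sum. Surjectivity is immediate, because a generator $(a_1,b_1)\wedge(a_2,b_2)$ expands, by bilinearity and the alternating law, as $a_1\wedge a_2+b_1\wedge b_2+(a_1\wedge b_2-a_2\wedge b_1)$, which lies in the image of $\phi$.

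Next I would define the candidate inverse $\psi\colon L\wedge L\to(A\wedge A)\oplus(B\wedge B)\oplus(A^{(ab)}\otimes B^{(ab)})$ on generators by
\[(a_1,b_1)\wedge(a_2,b_2)\ \longmapsto\ \bigl(a_1\wedge a_2,\ b_1\wedge b_2,\ \bar a_1\otimes\bar b_2-\bar a_2\otimes\bar b_1\bigr),\]
where bars denote images in the abelianizations. The main task is to check that $\psi$ is well defined, i.e.\ respects all the defining relations of $L\wedge L$: bilinearity, scalars and the alternating law $x\wedge x\mapsto 0$ are immediate, while for the bracket relation one uses that whenever a bracket $[x,y]$ appears its $A$- and $B$-components lie in $A^2$ and $B^2$, so the $A^{(ab)}\otimes B^{(ab)}$-coordinate of $\psi$ vanishes and the first two coordinates reduce to exactly the corresponding relation in $A\wedge A$ and $B\wedge B$. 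One then checks that $\psi$ preserves the Lie bracket — on $L\wedge L$ this bracket is given on generators by the action of $[x,y]$ on $z\wedge w$, and since $[x,y]$ is a bracket its effect on the third coordinate again vanishes, while on the first two coordinates it matches the brackets of $A\wedge A$ and $B\wedge B$, the third summand of the target being central. Finally $\phi$ and $\psi$ are visibly mutually inverse on generators, which gives the asserted isomorphism.

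The step I expect to be the main obstacle is precisely the well-definedness and bracket-compatibility of $\psi$: this is where the exact form of the defining relations of the nonabelian exterior square (and of its Lie bracket, coming from the crossed module $L\wedge L\to L^2$) must be used, and where the hypothesis $[A,B]=0$ does all the real work. In the finite-dimensional setting, which is all that is needed in the sequel, one may instead avoid constructing $\psi$ and prove $\phi$ injective by a dimension count: Lemma \ref{j1} gives $\dim(L\wedge L)=\dim\mathcal{M}(L)+\dim L^2$, and $\dim L^2=\dim A^2+\dim B^2$ together with Lemma \ref{1} and the analogous identities for $A$ and $B$ show that the domain of $\phi$ has the same dimension as $L\wedge L$; hence the surjection $\phi$ is an isomorphism.
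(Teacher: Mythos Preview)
The paper itself does not supply a proof of this lemma; it is simply quoted from \cite[Theorem 2.7]{ni3} without argument. Your approach is the standard one and is correct. The construction of the surjection $\phi$ is unproblematic, and your check that the three images centralise one another is fine (incidentally, the paper's own bracket formula from Lemma~\ref{j2222}, namely $[l_1\wedge l_2,l_3\wedge l_4]=[l_1,l_2]\wedge[l_4,l_3]$, gives the same vanishing in one step since $[A,B]=0$). You are right that the real work lies in the well-definedness and bracket-compatibility of $\psi$, and your outline of how $[A,B]=0$ forces the $A^{(ab)}\otimes B^{(ab)}$-coordinate to vanish whenever a commutator appears is exactly the point. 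Your closing shortcut---replacing the construction of $\psi$ by a dimension count via Lemmas~\ref{j1} and~\ref{1}---is valid and is in fact all that is needed for the finite-dimensional applications in this paper.
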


\begin{lem}\label{j2222}
Let $L$ be a Lie algebra. Then \[(L\wedge L)^2=\langle [l_1,l_2]\wedge [l_3,l_4]|l_1,l_2,l_3,l_4\in L\rangle= \langle l\wedge l'|l,l'\in L^2\rangle. \]
\end{lem}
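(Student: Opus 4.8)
The plan is to use the explicit Lie bracket on the exterior square recorded in \cite{elis1}. Recall that $(l,l')\mapsto l\wedge l'$ is bilinear and that on generators the bracket of $L\wedge L$ is $[l_1\wedge l_2,l_3\wedge l_4]=[l_1,l_2]\wedge[l_3,l_4]$ for all $l_1,l_2,l_3,l_4\in L$; this follows from the crossed module description $\kappa':L\wedge L\to L$, $l\wedge l'\mapsto[l,l']$, together with the defining relation $[a,b]\wedge c=a\wedge[b,c]-b\wedge[a,c]$ in $L\wedge L$. Once this formula is available, everything reduces to bookkeeping with it and with bilinearity.

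First I would note that $L\wedge L$ is \emph{linearly} spanned by the simple wedges $l\wedge l'$, $l,l'\in L$: the displayed formula shows this set is closed under the bracket, so its linear span is a Lie subalgebra, and since it contains the generating set it is all of $L\wedge L$. Hence $(L\wedge L)^2=[L\wedge L,L\wedge L]$ equals the linear span of the brackets $[x,y]$ with $x,y$ simple wedges, which by the formula and bilinearity of the bracket is exactly the linear span of the elements $[l_1,l_2]\wedge[l_3,l_4]$. That span is itself a subalgebra (apply the formula again: $[[l_1,l_2]\wedge[l_3,l_4],\,[l_5,l_6]\wedge[l_7,l_8]]=[u,v]\wedge[u',v']$ with $u=[l_1,l_2]$, $v=[l_3,l_4]\in L$, etc., still of the required shape), so it coincides with $\langle[l_1,l_2]\wedge[l_3,l_4]\mid l_1,l_2,l_3,l_4\in L\rangle$ whichever way $\langle\,\cdot\,\rangle$ is interpreted. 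This gives the first equality.

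For the second equality I would argue by two inclusions. Since $[l_1,l_2],[l_3,l_4]\in L^2$, every element $[l_1,l_2]\wedge[l_3,l_4]$ has the form $l\wedge l'$ with $l,l'\in L^2$, so $\langle[l_1,l_2]\wedge[l_3,l_4]\rangle\subseteq\langle l\wedge l'\mid l,l'\in L^2\rangle$. Conversely, $L^2=[L,L]$ is the linear span of the commutators $[a,b]$; writing $l=\sum_i\alpha_i[a_i,b_i]$ and $l'=\sum_j\beta_j[c_j,d_j]$ for arbitrary $l,l'\in L^2$ and expanding by bilinearity of $\wedge$ gives $l\wedge l'=\sum_{i,j}\alpha_i\beta_j\,[a_i,b_i]\wedge[c_j,d_j]$, which lies in $\langle[l_1,l_2]\wedge[l_3,l_4]\rangle$. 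The span $\langle l\wedge l'\mid l,l'\in L^2\rangle$ is likewise closed under the bracket, so again the span/generated-subalgebra distinction is harmless, and the two inclusions yield the claimed equality.

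The only step needing genuine care is the first: fixing the bracket formula on $L\wedge L$ and observing that the simple wedges span $L\wedge L$ as a vector space, not merely generate it as a Lie algebra. After that, everything is routine bilinear algebra and I foresee no real obstacle.
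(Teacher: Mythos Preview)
Your proof is correct and follows essentially the same approach as the paper: both hinge on the bracket identity $[l_1\wedge l_2,\,l_3\wedge l_4]=[l_1,l_2]\wedge[l_3,l_4]$ (the paper records it as $[l_1,l_2]\wedge[l_4,l_3]$, a harmless sign/convention difference that does not affect the span), after which everything is bilinear bookkeeping. The paper's version is simply terser, stating the key identity and omitting the routine closure and spanning checks that you spell out.
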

\begin{proof}
Since for every $ l_1,l_2,l_3,l_4\in L, $
 $[l_1\wedge l_2, l_3\wedge l_4]=[l_1, l_2]\wedge [l_4,l_3] \in \langle [l_1,l_2]\wedge [l_3,l_4]|l_1,l_2,l_3,l_4\in L\rangle,$ we get $(L\wedge L)^2=\langle [l_1,l_2]\wedge [l_3,l_4]|l_1,l_2,l_3,l_4\in L\rangle= \langle l\wedge l'|l,l'\in L^2\rangle.$
\end{proof}
Another notion having relation to capability is the epicenter $Z^{*}(L)$ of $L$, which is defined in \cite{ala}. From \cite{ni3}, $Z^{*}(L)=Z^{\wedge}(L)$
which is an interesting result. The following result gives a criteria for recognizing capable Lie algebras.
\begin{prop}\label{dd}

Let $ L $ be a finite dimensional Lie algebra with the central ideal $ I$ of $ L.$ Then
\begin{itemize}
\item[$(i)  $]$ \dim (\mathcal{M}(L))\geq \dim( \mathcal{M}(L/I))-\dim (L^{2}\cap I),$
\item[$(ii)  $]$ \dim (\mathcal{M}(L))=\dim( \mathcal{M}(L/I))-\dim (L^{2}\cap I) $ if and only if $ I\subseteq Z^{*}(L).$
\end{itemize}
\end{prop}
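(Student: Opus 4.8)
The plan is to reduce the statement to a single dimension count obtained by applying Lemma~\ref{j1} to both $L$ and $L/I$ and comparing the two exterior squares via the epimorphism induced by $L\to L/I$. First I would observe that the projection $L\to L/I$ induces a homomorphism $\pi\colon L\wedge L\to (L/I)\wedge(L/I)$ sending $l\wedge l'$ to $(l+I)\wedge(l'+I)$; since such elements generate $(L/I)\wedge(L/I)$, the map $\pi$ is surjective. Writing $K=\ker\pi$, we get $\dim(L\wedge L)=\dim\big((L/I)\wedge(L/I)\big)+\dim K$. I would also record that $(L/I)^2=(L^2+I)/I\cong L^2/(L^2\cap I)$, hence $\dim(L/I)^2=\dim L^2-\dim(L^2\cap I)$; in particular everything in sight is finite dimensional by Lemma~\ref{j1}.

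Applying Lemma~\ref{j1} to $L$ and to $L/I$ then gives
\[
\dim(L\wedge L)=\dim\mathcal{M}(L)+\dim L^2,\qquad \dim\big((L/I)\wedge(L/I)\big)=\dim\mathcal{M}(L/I)+\dim L^2-\dim(L^2\cap I).
\]
Substituting these into the relation $\dim(L\wedge L)=\dim\big((L/I)\wedge(L/I)\big)+\dim K$ and cancelling $\dim L^2$ yields
\[
\dim\mathcal{M}(L)=\dim\mathcal{M}(L/I)-\dim(L^2\cap I)+\dim K.
\]
Part $(i)$ is then immediate from $\dim K\ge 0$.

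For part $(ii)$, equality in $(i)$ holds if and only if $\dim K=0$, i.e.\ if and only if $\pi$ is an isomorphism (it is always onto). By Lemma~\ref{j2}, $\pi\colon L\wedge L\to (L/I)\wedge(L/I)$ is an isomorphism exactly when $I\subseteq Z^{\wedge}(L)$, and since $Z^{\wedge}(L)=Z^{*}(L)$ (recalled just before the statement), this is precisely the condition $I\subseteq Z^{*}(L)$.

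The argument is essentially bookkeeping once the exact sequences are lined up; the only point that needs a little care is the identification $(L/I)^2\cong L^2/(L^2\cap I)$, which is what makes $\dim L^2$ — rather than some smaller quantity — cancel in the computation above, so that no hypothesis beyond finite-dimensionality is used in $(i)$. The genuine content of $(ii)$ is carried entirely by Lemma~\ref{j2} together with the equality $Z^{\wedge}(L)=Z^{*}(L)$, so I do not expect a serious obstacle here.
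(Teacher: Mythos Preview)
Your argument is correct. The paper itself does not give an independent proof of this proposition: it simply cites \cite[Proposition~4.1(iii) and Theorem~4.4]{ala}. Your route is therefore different in spirit. Instead of appealing to the external reference, you deduce both parts internally from material already assembled in the paper, namely the exact sequence of Lemma~\ref{j1} applied to $L$ and to $L/I$, the characterisation of $Z^{\wedge}(L)$ in Lemma~\ref{j2}, and the equality $Z^{\wedge}(L)=Z^{*}(L)$ recalled just before the statement. This buys self-containment and makes the role of each hypothesis transparent: your derivation of part~$(i)$ never actually uses that $I$ is central, and in part~$(ii)$ centrality of $I$ is forced a posteriori by $I\subseteq Z^{\wedge}(L)\subseteq Z(L)$. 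The cited approach in \cite{ala} works instead with free presentations and the definition of $Z^{*}(L)$ directly, which is the more classical path but less economical given the tools the present paper has already set up.
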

\begin{proof}
The result follows from \cite[Proposition 4.1(iii) and Theorem 4.4]{ala}.
\end{proof}
Recall that a Lie algebra is called unicentral provided that $Z(L)=Z^{\wedge} (L).$  For a stem Lie algebra $($a Lie algebra such that $Z(L)\subseteq L^2$)  class 3 and the derived subalgebra of dimension 2, we have
\begin{prop}\label{gk}\cite[Corollary 4.11]{twoo}
Let $ T $ be an $ n$-dimensional stem algebra    of class $3  $ and $ \dim T^2=2.$ Then $ T $ is non-capable if and only if $ n\geq 6.$
Moreover, $T $ is unicentral.
\end{prop}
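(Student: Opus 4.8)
The plan is to first read off the internal structure of $T$, then reduce the capability question to a single inequality between Schur multipliers, and finally settle the remaining cases using the classification of nilpotent Lie algebras with a two‑dimensional derived subalgebra. For the structure: since $T$ has class $3$ we have $\gamma_3(T)=[T^2,T]\neq 0$, and $\gamma_3(T)\subseteq T^2$ with $\gamma_3(T)\neq T^2$ (nilpotency forbids $[T^2,T]=T^2$), so $\dim\gamma_3(T)=1$ and $\gamma_4(T)=[\gamma_3(T),T]=0$; in particular $\gamma_3(T)\subseteq Z(T)$. As $T$ is a stem algebra, $Z(T)\subseteq T^2$, and $Z(T)=T^2$ is impossible (it would force $\gamma_3(T)=0$), so $\dim Z(T)=1$ and $Z(T)=\gamma_3(T)=:\langle z\rangle$. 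Recalling $Z^{\wedge}(T)\subseteq Z(T)$, the exterior centre is either $0$ or all of $Z(T)$; hence $T$ is unicentral as soon as it fails to be capable, and the ``moreover'' clause will come out as a by‑product of the dichotomy.

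Next I would reduce capability. Put $I=\langle z\rangle=Z(T)\cap T^2$ and $\bar T=T/I$, so $\dim\bar T^2=1$ and, by Lemma \ref{5}, $\bar T\cong H(m)\oplus A(n-2m-2)$ for some $m\geq 1$. Proposition \ref{dd}(i) gives $\dim\mathcal{M}(T)\geq\dim\mathcal{M}(\bar T)-1$, and by Proposition \ref{dd}(ii) equality holds precisely when $I\subseteq Z^{*}(T)=Z^{\wedge}(T)$, i.e. precisely when $z\in Z^{\wedge}(T)$, i.e. precisely when $T$ is non‑capable. Therefore
\[ T\ \text{is capable}\ \Longleftrightarrow\ \dim\mathcal{M}(T)\geq\dim\mathcal{M}(\bar T), \]
and by Proposition \ref{d} the right‑hand value equals $\tfrac12(n-2)(n-3)+1$ when $m=1$ and $\tfrac12(n-2)(n-3)-1$ when $m\geq 2$.

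Finally I would bring in the classification of $n$‑dimensional nilpotent Lie algebras with $\dim T^2=2$: for each $n$ it yields a short explicit list of stem algebras of class $3$ (each obtained by lifting the presentation $H(m)\oplus A(n-2m-2)$ of $\bar T$ along a $2$‑cocycle with values in $\langle z\rangle$ for which $\gamma_3(T)\neq 0$). For $n=4$ and $n=5$ one evaluates $\dim\mathcal{M}(T)$ for the algebras on the list and checks the inequality above is strict, so $T$ is capable; equivalently, for these small algebras one just exhibits a Lie algebra $H$ with $T\cong H/Z(H)$ (for the $4$‑dimensional filiform one may take $H$ to be the $5$‑dimensional filiform). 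For $n\geq 6$ one shows instead that equality holds — either from the value of $\dim\mathcal{M}(T)$, or by verifying through Lemma \ref{j2} that the natural map $T\wedge T\to\bar T\wedge\bar T$ is an isomorphism — so $T$ is non‑capable, and then unicentral by the first step.

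The real work is entirely in this last step: producing the normal forms and then running the Schur‑multiplier computation uniformly across every case, the delicate point being to see exactly why the extra direction $\langle z\rangle$ carries a genuinely new $2$‑cycle precisely when $n\leq 5$. Since this case analysis is carried out in \cite[Corollary 4.11]{twoo}, I would simply appeal to it once the reductions above are in place.
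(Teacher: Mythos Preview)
The paper gives no proof of this proposition at all: it is simply quoted from \cite[Corollary~4.11]{twoo}. Your reductions---showing $Z(T)=\gamma_3(T)$ is one-dimensional, identifying $T/Z(T)$ via Lemma~\ref{5}, and converting capability into the equality case of Proposition~\ref{dd}---are correct and in fact reconstruct part of what must lie behind the cited result; but since you too ultimately defer the decisive case analysis to \cite[Corollary~4.11]{twoo}, both arguments rest on the same external reference. Yours just makes the skeleton visible before invoking it.

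Two small remarks. First, the paper's Lemma~\ref{ggg} (also imported from \cite{twoo}) asserts that $m=1$ always, i.e.\ $T/Z(T)\cong H(1)\oplus A(n-4)$, so your general-$m$ formulation is broader than necessary; you never actually need the $m\geq 2$ branch. Second, your reading of the clause ``Moreover, $T$ is unicentral'' as applying only in the non-capable range $n\geq 6$ is the correct one: as you observe, for $n\in\{4,5\}$ one has $Z^{\wedge}(T)=0\neq Z(T)$, and indeed the paper only invokes unicentrality under the hypothesis $n\geq 6$ (see the proofs of Theorems~\ref{121} and~\ref{jjk1}).
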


\begin{lem}\label{ggg}\cite[Lemma 4.5]{twoo}
   Let $ T $ be an $ n $-dimensional stem Lie algebra   of class $ 3 $  such that $ \dim T^2=2.$ Then $ Z(T)=T^3\cong A(1)$ and $T/Z(T)\cong H(1)\oplus A(n-4).$
  \end{lem}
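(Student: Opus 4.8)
The plan is to pin down the lower central series of $T$, then to identify $Z(T)$, and finally to read off $T/Z(T)$ from Lemma~\ref{5}; essentially all the work is concentrated in the last step.

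First I would observe that, since $T$ is nilpotent of class $3$, the lower central series $T\supset T^2\supset T^3\supset T^4=0$ is strictly decreasing (for a nilpotent Lie algebra $\gamma_i=\gamma_{i+1}$ forces $\gamma_i=0$); as $\dim T^2=2$ this leaves $\dim T^3=1$, so $T^3\cong A(1)$. Moreover $[T,T^3]=T^4=0$ gives $T^3\subseteq Z(T)$, while $Z(T)\subseteq T^2$ because $T$ is a stem algebra. Thus $T^3\subseteq Z(T)\subseteq T^2$ with $\dim T^3=1<2=\dim T^2$; if $Z(T)\neq T^3$ then $Z(T)=T^2$, whence $T^3=[T,T^2]=[T,Z(T)]=0$, contradicting that $T$ has class $3$. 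Hence $Z(T)=T^3\cong A(1)$. It follows that $(T/Z(T))^2=T^2/T^3$ is one-dimensional and $T/Z(T)$ is nilpotent of dimension $n-1$, so by Lemma~\ref{5} there is an integer $m\geq 1$ with $T/Z(T)\cong H(m)\oplus A(n-2m-2)$, and it remains only to prove $m=1$.

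For that I would argue by contradiction. Suppose $m\geq 2$, put $T^3=\langle c\rangle$ and choose $a\in T^2\setminus T^3$, so that $T^2=\langle a,c\rangle$; since $[T,c]\subseteq[T,T^3]=0$ and $[T^2,a]\subseteq[T^2,T^2]\subseteq T^4=0$, one has $T^3=[T,T^2]=[T,a]=\langle\,[g,a]\mid g\in S\,\rangle$ for any generating set $S$ of $T$. Identifying $T/Z(T)$ with $H(m)\oplus A(n-2m-2)$, lift standard generators $\bar x_1,\bar y_1,\dots,\bar x_m,\bar y_m,\bar z_1,\dots$ (with $[\bar x_i,\bar y_j]=\delta_{ij}\bar a$, where $\bar a$ is the image of $a$, and all other brackets among the generators zero) to elements $x_i,y_i,z_j\in T$, which then generate $T$. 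The point is that for every generator $g\notin\{x_1,y_1\}$ the Jacobi identity on the triple $(y_1,x_1,g)$ collapses, modulo the relations $[T^3,T]=0$, $[T^2,T^2]=0$ and $c\in Z(T)$, to $[a,g]=0$; and the two remaining generators $x_1,y_1$ are handled the same way using the second Heisenberg pair, via $(y_2,x_2,x_1)$ and $(y_2,x_2,y_1)$. Thus $[a,g]=0$ for every generator $g$, so $T^3=[T,a]=0$, contradicting the hypothesis that $T$ has class $3$. Therefore $m=1$ and $T/Z(T)\cong H(1)\oplus A(n-4)$.

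I expect this contradiction step to be the main obstacle: although the Jacobi bookkeeping is routine, it must be arranged with care, the crucial feature being the availability of a second Heisenberg pair $(x_2,y_2)$ to dispose of $x_1$ and $y_1$ themselves. A shorter alternative, using results already quoted, becomes available once $Z(T)=T^3$ is known: by Proposition~\ref{gk} the algebra $T$ is unicentral, so $Z(T)=Z^{\wedge}(T)$, and hence $T/Z(T)=T/Z^{\wedge}(T)$ is capable --- otherwise a nonzero ideal $\bar N\subseteq Z^{\wedge}(T/Z(T))$ would, via Lemma~\ref{j2} applied twice together with functoriality of the exterior square under the surjections $T\to T/Z(T)\to (T/Z(T))/\bar N$, yield an ideal $M$ of $T$ with $Z^{\wedge}(T)\subsetneq M\subseteq Z^{\wedge}(T)$, which is absurd --- and by Lemma~\ref{5} an algebra $H(m)\oplus A(\,\cdot\,)$ is capable only for $m=1$.
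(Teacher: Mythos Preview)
The paper does not prove this lemma at all; it is quoted verbatim from \cite[Lemma~4.5]{twoo} with no argument supplied. So there is nothing to compare your approach against beyond noting that you have supplied a self-contained proof where the paper simply cites one.

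Your main argument is correct. The identification $Z(T)=T^3\cong A(1)$ is straightforward, and the Jacobi computation ruling out $m\ge 2$ is sound: the key points---that any lift of a generating set of $T/T^2$ generates $T$, that $[T,a]=[\mathrm{span}(S),a]$ because $[T^2,a]\subseteq T^4=0$, and that the second Heisenberg pair $(x_2,y_2)$ handles $x_1,y_1$---are all handled cleanly. One could shorten the last paragraph slightly by observing that the centraliser $C_T(a)$ is a subalgebra (Jacobi), so $S\subseteq C_T(a)$ already gives $a\in Z(T)$; but what you wrote is fine.

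Your proposed alternative via Proposition~\ref{gk} deserves two cautions. First, the ``moreover, $T$ is unicentral'' clause in Proposition~\ref{gk} can only hold when $T$ is non-capable, i.e.\ when $n\ge 6$: for $n\in\{4,5\}$ one has $Z^{\wedge}(T)=0\neq T^3=Z(T)$, so the alternative needs the trivial dimension count $n-2m-2\ge 0$ to cover those small cases. Second, and more seriously, in the source \cite{twoo} Proposition~\ref{gk} is Corollary~4.11 while the present lemma is Lemma~4.5, so the former almost certainly relies on the latter; invoking Proposition~\ref{gk} here is therefore circular as far as the underlying logic goes, even if both appear as black-box citations in this paper. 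Your direct Jacobi argument avoids this and is the one to keep.
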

Using notation and terminology in \cite{cic,gon,Gr,had1,hard}, we have
\[L_{4,3}\cong L(3,4,1,4)\cong\langle x_1,\ldots,x_4|[x_1, x_2] = x_3, [x_1, x_3] = x_4\rangle.\]
\[  L_{5,5}\cong L(4,5,1,6)\cong \langle x_1,\ldots,x_5| [x_1, x_2] = x_3, [x_1, x_3] = x_5, [x_2, x_4] = x_5\rangle.\]
\[ L_{5,8}=\langle  x_1,\ldots, x_5\big{|}[x_1, x_2] = x_4, [x_1, x_3] = x_5\rangle.\]
\[ L_{6,22}(\epsilon)=\langle   x_1,\ldots, x_6\big{|}[x_1, x_2] = x_5= [x_3,x_4], [x_1,x_3] = x_6,[x_2, x_4] = \epsilon x_6 \rangle,\]  where $\epsilon \in \mathbb{F} /(\overset{*+}{ \sim})$ and  $\mathrm{Char}~\mathbb{F} \neq 2.$
\[ L_{6,7}^{(2)}(\eta)\cong \langle   x_1,\ldots, x_6\big{|}[x_1, x_2] = x_5, [x_3, x_4]=x_5+x_6, [x_1, x_3] = x_6,[x_2, x_4] = \eta x_6 \rangle,\]  where $\eta \in \{0, \omega\}$ and  $~\mathrm{Char}~\mathbb{F} = 2.$
\[L_1\cong   \langle x_1,\ldots, x_7\big{|}[x_1, x_2] = x_6=[x_3, x_4], [x_1, x_5] = x_7= [x_2, x_3]\rangle.\]
 Then
  \begin{prop}\label{mul}
The Schur multiplier of Lie algebras $L_{5,8},L_{6,22}(\epsilon),  $  $L_{6,7}^{(2)}(\eta),L_1,$  $L_{4,3},L_{5,5}  $ are abelian Lie algebras of dimension $6, 8,8,9,2$ and $4,$ respectively.
\end{prop}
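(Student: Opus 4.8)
The plan is to compute all six multipliers through the exact sequence of Lemma~\ref{j1}. Since $\dim L^2 = 2$ for every algebra on the list, Lemma~\ref{j1} gives $\dim\mathcal{M}(L) = \dim(L\wedge L) - 2$, so everything reduces to determining each exterior square.

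I would first show that every $L\wedge L$ here is abelian; this immediately yields the assertion that $\mathcal{M}(L)$ is abelian, since $\mathcal{M}(L) = \ker\kappa'$ is an ideal of $L\wedge L$. By Lemma~\ref{j2222}, $(L\wedge L)^2 = \langle w\wedge w' \mid w,w'\in L^2\rangle$, so with a basis $\{w_1,w_2\}$ of $L^2$ it is enough to check $w_1\wedge w_2 = 0$. Choosing $w_1 = [x_a,x_b]$ for two of the defining generators, the exterior-square relation $[x_a,x_b]\wedge w_2 = x_a\wedge[x_b,w_2] - x_b\wedge[x_a,w_2]$ places $w_1\wedge w_2$ in the span of the elements $x_i\wedge u$ with $u\in L^3$. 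For the class-two algebras $L_{5,8}$, $L_{6,22}(\epsilon)$, $L_{6,7}^{(2)}(\eta)$, $L_1$ one has $L^3 = 0$; for the class-three algebras $L_{4,3}$, $L_{5,5}$ one may take $w_2\in L^3 = Z(L)$ (available by Lemma~\ref{ggg}), so that $[x_a,w_2] = [x_b,w_2] = 0$. Either way $w_1\wedge w_2 = 0$ and $(L\wedge L)^2 = 0$.

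Next I would compute $\dim(L\wedge L)$ directly from each presentation. The exterior square is spanned by the $\binom{n}{2}$ symbols $x_i\wedge x_j$, and the Lie-structure relations $[x_i,x_j]\wedge x_k = x_i\wedge[x_j,x_k] - x_j\wedge[x_i,x_k]$ reduce, upon inserting the structure constants of $L$, to an explicit homogeneous linear system in these symbols; its corank is $\dim(L\wedge L)$. Carrying this out gives $\dim(L\wedge L) = 8, 10, 10, 11, 4, 6$ for $L_{5,8}, L_{6,22}(\epsilon), L_{6,7}^{(2)}(\eta), L_1, L_{4,3}, L_{5,5}$, hence $\dim\mathcal{M}(L) = 6, 8, 8, 9, 2, 4$. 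As an independent check on the lower bounds, for a suitable one-dimensional central ideal $I\subseteq L^2$ each quotient $L/I$ is a direct sum of a Heisenberg algebra and an abelian one, so $\dim\mathcal{M}(L/I)$ is known from Lemmas~\ref{1}, \ref{ss} and Proposition~\ref{d}; Proposition~\ref{dd}(i) then bounds $\dim\mathcal{M}(L)$ from below, and this bound can be sharpened via capability --- Proposition~\ref{gk} for $L_{4,3}$, $L_{5,5}$, and the corresponding capability of the class-two stem algebras detected through Lemma~\ref{j2} --- although for $L_{5,8}$ and the two six-dimensional families this recovers only part of the value, so the exact numbers really come from the linear systems.

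I expect the main work, and the only delicate part, to be the rank computations for the larger algebras together with the two parametrised families: for $L_{6,22}(\epsilon)$ and $L_{6,7}^{(2)}(\eta)$ one must check that the relations contributed by the brackets $[x_2,x_4] = \epsilon x_6$, respectively $[x_2,x_4] = \eta x_6$, do not alter the corank, so that $\dim\mathcal{M}(L)$ is independent of $\epsilon$ and of $\eta\in\{0,\omega\}$, and for $L_{6,7}^{(2)}(\eta)$, where $\mathrm{Char}\,\mathbb{F} = 2$, one must ensure the computation never divides by $2$ nor uses a relation of the form $2(x\wedge y) = 0$. Granting these checks, the stated dimensions follow, and together with $(L\wedge L)^2 = 0$ they complete the proof.
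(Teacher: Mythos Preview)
Your approach is correct but genuinely different from the paper's. The paper does not compute anything here: its proof is a bare citation of \cite{had1,hard}, \cite[Proposition~2.10]{ni4} and \cite[Proposition~3.3]{twoo}, where these multipliers were already tabulated. You instead propose a self-contained computation, first establishing $(L\wedge L)^2=0$ via Lemma~\ref{j2222} and then determining $\dim(L\wedge L)$ from the presentation, so that $\dim\mathcal{M}(L)=\dim(L\wedge L)-2$ by Lemma~\ref{j1}. This is exactly the \emph{reverse} of the paper's logical flow: later, in Theorem~\ref{mull}, the paper uses the multiplier dimensions from Proposition~\ref{mul} (together with Lemma~\ref{fg} and Theorem~\ref{j22}) to obtain $L\wedge L$, whereas you obtain $L\wedge L$ first and read off $\mathcal{M}(L)$.

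What each route buys: the paper's version is short but not self-contained; yours avoids external dependence at the cost of six explicit corank computations, including the two parametrised families and the characteristic-$2$ case you flag. Your abelianness argument for the class-$3$ algebras is the same one the paper uses in the proof of Theorem~\ref{mull} (cases $(e)$,$(f)$), so that part aligns. The ``independent check'' via Proposition~\ref{dd}(i) is, as you note, only a partial lower bound for several of the algebras and does not by itself pin down the dimensions; the proof really rests on the linear-algebra step, which you outline but do not carry out in the proposal.
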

\begin{proof}
The result follows from \cite{had1,hard}, \cite[Proposition 2.10]{ni4} and \cite[Proposition 3.3]{twoo}.
\end{proof}

 Let $ cl(L) $ be the nilpotency class of $ L. $ The following theorem gives the classification of all capable finite dimensional nilpotent Lie algebras with the derived subalgebra of dimension at most two.
\begin{thm}\cite[Theorem 5.5]{twoo}\label{kkkkkk}
Let $ L $ be an $ n$-dimensional  nilpotent Lie algebra such that $\dim L^2\leq 2.$  Then $ L $ is capable is if and only if $ L $ is isomorphic to one the following Lie algebras.
\begin{itemize}
\item[(i)] If $\dim L^2=0,$ then $ L\cong A(n) $ and $ n>1.$
 \item[(ii)] If $\dim L^2=1,$ then $ L\cong H(1)\oplus A(n-3).$
\item[(iii)]If $\dim L^2=2$ and $cl(L)=2,$ then $ L\cong L_{5,8} \oplus A(n-5),$ $ L=L_{6,7}^{(2)}(\eta) \oplus A(n-6),$ $ L\cong L_{6,22}(\epsilon) \oplus A(n-6),$ or $L\cong L_1\oplus A(n-7).$
\item[(iv)]If $\dim L^2=2$ and $cl(L)=3,$ then $L\cong L_{4,3}\oplus  A(n-4) $ or  $L\cong L_{5,5}\oplus  A(n-5).$
\end{itemize}
\end{thm}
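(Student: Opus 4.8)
The plan is to split according to $\dim L^{2}\in\{0,1,2\}$, in each case peeling off a maximal abelian direct summand $A(k)$ and reducing to the capability of the remaining ``core'' $K$, where $L\cong K\oplus A(k)$. The reduction I would establish first is the following dichotomy for a nonzero nilpotent Lie algebra $K$ (writing $Z^{*}$ for the epicenter, which equals $Z^{\wedge}$): if $K$ is capable then $K\oplus A(m)$ is capable for all $m\ge 0$, while if $0\neq Z^{*}(K)\subseteq K^{2}$ then $K\oplus A(m)$ is non-capable for all $m\ge 0$. One proves this by induction on $m$, the crux being $m=1$: combining Lemma~\ref{1} and Lemma~\ref{15555} with Proposition~\ref{dd}, and using $\dim\mathcal M(K\oplus A(1))=\dim\mathcal M(K)+\dim(K/K^{2})$, one checks that a $1$-dimensional central ideal $I$ of $K\oplus A(1)$ lies in $Z^{*}(K\oplus A(1))$ precisely when $I$ is a line of $Z^{*}(K)\cap K^{2}$; hence $Z^{*}(K\oplus A(1))=Z^{*}(K)\cap K^{2}$ as a subspace, and both assertions follow. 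For the capable direction one may alternatively note $K\oplus A(m)\cong(H\oplus F)/Z(H\oplus F)$ when $K=H/Z(H)$ and $A(m)=F/Z(F)$ for $F$ the free nilpotent Lie algebra of class $2$ on $m\ge 2$ generators.

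With this in hand the cases $\dim L^{2}\le 1$ are immediate. If $\dim L^{2}=0$ then $L\cong A(n)$, which is $F/Z(F)$ for $F$ the free class-$2$ nilpotent algebra on $n$ generators exactly when $n\ge 2$, whereas $A(1)$ is plainly not capable; this is (i). If $\dim L^{2}=1$, Lemma~\ref{5} already gives $L\cong H(m)\oplus A(n-2m-1)$ with capability iff $m=1$; this is (ii). If $\dim L^{2}=2$, then $\dim L^{3}\le 1$ forces $cl(L)\in\{2,3\}$. When $cl(L)=3$ we have $\dim L^{3}=1$; writing $L\cong T\oplus A(k)$ with $T$ admitting no abelian direct summand forces $T$ to be a stem algebra of class $3$ with $\dim T^{2}=2$ (a central element outside $T^{2}$ would split off). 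By Proposition~\ref{gk}, $T$ is capable iff $\dim T\le 5$, and for $\dim T\ge 6$ it is moreover unicentral; the classifications \cite{cic,gon,Gr} identify the stem algebras here of dimension $4$ and $5$ as exactly $L_{4,3}$ and $L_{5,5}$. Hence for $\dim T\le 5$ the algebra $T\oplus A(k)$ is capable by the reduction above, which is (iv); and for $\dim T\ge 6$, unicentrality gives $0\neq T^{3}=Z(T)=Z^{*}(T)\subseteq T^{2}$, so $T\oplus A(k)$ is non-capable.

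It remains to treat $cl(L)=2$, $\dim L^{2}=2$, where $L^{2}\subseteq Z(L)$. Here $L\cong K\oplus A(k)$ with $K$ of class $2$, $\dim K^{2}=2$ and $Z(K)=K^{2}$; such $K$ are classified by $2$-dimensional pencils of alternating bilinear forms with trivial common radical, and over the algebraically closed field $F$ this yields in each dimension a short explicit list (with parameter families and a characteristic-$2$ variant), available in the tables of \cite{had1,hard}. For such a $K$ and a $1$-dimensional central ideal $I\subseteq K^{2}$, Proposition~\ref{dd} says $I\subseteq Z^{*}(K)$ iff $\dim\mathcal M(K)=\dim\mathcal M(K/I)-1$, and $K/I$ has dimension $\dim K-1$ with $\dim(K/I)^{2}=1$, so $\dim\mathcal M(K/I)$ is one of the values of Proposition~\ref{d}. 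Using $\dim\mathcal M(K)=6,8,8,9$ for $K=L_{5,8},L_{6,22}(\epsilon),L_{6,7}^{(2)}(\eta),L_{1}$ from Proposition~\ref{mul}, one checks $\dim\mathcal M(K/I)\le\dim\mathcal M(K)$ for every such $I$, so no $I$ lies in $Z^{*}(K)$ (which would require $\dim\mathcal M(K/I)=\dim\mathcal M(K)+1$); thus these $K$, and by the reduction also $K\oplus A(k)$, are capable, giving (iii). For every remaining such $K$ one exhibits, from the explicit relations, a line $I\subseteq K^{2}$ with $\dim\mathcal M(K/I)=\dim\mathcal M(K)+1$, so $0\neq Z^{*}(K)\subseteq K^{2}$ and $K\oplus A(k)$ is non-capable.

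The step I expect to be the main obstacle is this last one. It depends on invoking the classification of the relevant pencils of alternating forms — including the parameter dependence and the characteristic-$2$ subtleties — and, crucially, on showing that no core of dimension exceeding $7$ can be capable. Unlike the four capable cores, which are dispatched uniformly by the inequality $\dim\mathcal M(K)\ge\dim\mathcal M(K/I)$, every non-capable core (in particular every core of dimension at least $8$) must be handled individually by producing a quotient $K/I$ whose Schur multiplier realizes the boundary case $\dim\mathcal M(K/I)=\dim\mathcal M(K)+1$ of Proposition~\ref{dd}, and it is this case-by-case analysis, together with the accompanying dimension bound, that constitutes the technical heart of the argument.
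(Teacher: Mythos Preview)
The paper does not prove this theorem at all: it is quoted as \cite[Theorem~5.5]{twoo} and used as input, with no argument supplied here. So there is no in-paper proof to compare your proposal against.

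Your outline is nonetheless the natural one, and it lines up with the tools this paper already imports. Your reduction step---showing $Z^{*}(K\oplus A(m))=Z^{*}(K)$ for a stem core $K$---is exactly Proposition~\ref{811} (taken from \cite{pair}), so you need not rederive it from Proposition~\ref{dd} and Lemma~\ref{1}; citing it directly would shorten the argument. Cases (i), (ii) and (iv) then go through just as you say, via Lemma~\ref{5}, Proposition~\ref{gk} and Lemma~\ref{ggg}, together with the low-dimensional classifications \cite{cic,Gr}. For case~(iii) you correctly locate the real work: one must run through the generalized Heisenberg algebras of rank~$2$ and decide capability for each. That analysis is precisely the content of \cite{ni4} (summarised here as Proposition~\ref{12}) and of \cite{twoo} itself, so the ``case-by-case'' you anticipate is already carried out in those references rather than something you would need to improvise. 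One caution: your final paragraph speaks of handling ``every non-capable core \dots\ individually'', but there is one such core in every dimension $\ge 5$ except the four capable ones, so a literal case-by-case is not finite; what \cite{ni4,twoo} actually provide is a uniform argument organised by the Kronecker normal form of the underlying pencil of alternating forms, yielding the two multiplier values in Proposition~\ref{12} and the dimension bound $\dim K\le 7$ for capable cores. With that adjustment your plan is sound and coincides with the strategy of the cited source.
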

\section{main results}
In this section, we are going to compute some functors, such as the Schur multiplier, the exterior square and the tensor square of all nilpotent  Lie algebras with the derived subalgebra of dimension at most $ 2.$ Moreover, we obtain  the corank, $t(L),  $ of all such Lie algebras $L$, where $t(L)=\dfrac{1}{2} n(n-1)-\dim\mathcal{M}(L).$

 Since by Theorem \ref{kkkkkk}, the structure of all capable nilpotent  Lie algebras with the derived subalgebra of dimension at most $ 2$ is well-known,  first of all, we focus on non-abelian non-capable nilpotent  Lie algebras  with the derived subalgebra of dimension at most $ 2.$ Then
by classification of capable nilpotent  Lie algebras with the derived subalgebra of dimension at most $ 2$, we
compute various  functors for these Lie algebras.

 The following lemma shows that the capability of a finite dimensional non-abelian Lie algebra depends only to the capability of a stem subalgebra as a direct summand.
 \begin{prop}  \label{811}
Let $ L $ be a finite dimensional nilpotent Lie algebra of nilpotency class $c>1.$ Then $ L\cong T\oplus A $ and $ Z(T)=Z(L)\cap L^2$ where $ A $ is abelian and $ T $ is stem. Moreover, $ Z^{\wedge}(L)=Z^{\wedge}(T). $
\end{prop}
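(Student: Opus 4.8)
The plan is to produce the direct decomposition $L \cong T \oplus A$ by a standard "complement to the derived subalgebra inside a complement to the center" argument, and then verify the two center identities and the exterior-center identity separately.

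First I would extract $T$. Since $L$ is nilpotent of class $c>1$, we have $L^2 \neq 0$ and $L^2 \subseteq$ (some proper ideal); more to the point, consider $Z(L) \cap L^2$. Choose a subspace decomposition $Z(L) = (Z(L)\cap L^2) \oplus A$ for some complement $A$. Because $A \cap L^2 = 0$ and $A$ is central, $A$ is an abelian ideal and $[A,L]=0$, so any vector space complement $T$ of $A$ in $L$ containing $L^2$ is automatically a subalgebra (indeed an ideal), and $L = T \oplus A$ as Lie algebras; one must check such a $T$ exists, which follows since $A \cap L^2 = 0$ lets us enlarge a complement of $A$ to contain $L^2$. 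Then $T^2 = L^2$ (as $[A,L]=0$), and I must show $T$ is stem, i.e. $Z(T) \subseteq T^2$. Here the key step is the identity $Z(T) = Z(L) \cap L^2$: the inclusion $Z(L)\cap L^2 = Z(L) \cap T^2 \subseteq Z(T)$ is clear since $[Z(L)\cap L^2, T] \subseteq [Z(L),L]=0$; for the reverse inclusion, take $z \in Z(T)$, so $[z,T]=0$, and since $[z,A]=0$ too we get $z \in Z(L)$, hence $z \in Z(L)\cap T = Z(L)\cap T$; then because $Z(L) = (Z(L)\cap L^2)\oplus A$ and $z \in T$ with $T \cap A = 0$, we conclude $z \in Z(L) \cap L^2$. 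This simultaneously shows $Z(T) = Z(L)\cap L^2 \subseteq L^2 = T^2$, so $T$ is stem.

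For the final claim $Z^{\wedge}(L) = Z^{\wedge}(T)$, I would use Lemma \ref{15555}: $(T\oplus A)\wedge(T\oplus A) \cong (T\wedge T)\oplus(A\wedge A)\oplus (T^{(ab)}\otimes A^{(ab)})$, together with the criterion of Lemma \ref{j2} that $N \subseteq Z^{\wedge}(L)$ iff $L\wedge L \to L/N \wedge L/N$ is an isomorphism. Since $A$ is abelian and central, $A \subseteq Z^{\wedge}(L)$ follows from Lemma \ref{j2} applied with $N=A$ (the induced map on exterior squares is an isomorphism because $L/A \cong T$ and the $T\wedge T$ summand is carried identically while the other two summands collapse consistently); hence $Z^{\wedge}(T) \subseteq Z^{\wedge}(L)$ and conversely $Z^{\wedge}(L) \supseteq A$, so it remains to intersect with $T$. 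Concretely, for $N \subseteq T$ one checks via the decomposition of Lemma \ref{15555} that $L \wedge L \to L/N \wedge L/N$ is an isomorphism iff $T\wedge T \to T/N \wedge T/N$ is, because the $A\wedge A$ and $T^{(ab)}\otimes A^{(ab)}$ summands are unaffected (using $T^{(ab)} = T/T^2$ and $(T/N)^{(ab)} = T/(T^2+N)$, and $T^2 \subseteq$ the relevant subspace since $N$ may be taken inside $Z(T)\subseteq T^2$). This gives $Z^{\wedge}(L) \cap T = Z^{\wedge}(T)$, and combined with $A \subseteq Z^{\wedge}(L)$ and $Z^{\wedge}(T) \subseteq T$ one obtains $Z^{\wedge}(L) = Z^{\wedge}(T)$.

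The main obstacle I anticipate is the bookkeeping in the exterior-center step: one must be careful that the isomorphism in Lemma \ref{15555} is natural enough that a quotient $L \to L/N$ with $N \subseteq T$ induces the map $T\wedge T \to T/N\wedge T/N$ on the first summand and the identity (or a controlled surjection) on the tensor-product summand, so that injectivity/surjectivity of the whole map reduces to that of its restriction to $T\wedge T$. A clean way around this is to note $Z^{\wedge}$ is additive on direct sums in the sense that $Z^{\wedge}(T\oplus A) = Z^{\wedge}(T)\oplus Z^{\wedge}(A)$ and that $Z^{\wedge}(A) = A$ for abelian $A$ (since abelian Lie algebras have trivial exterior center complement — equivalently $A(n)$ with $n\geq 2$ is non-capable only in the relevant sense, but here we only need $A \subseteq Z^{\wedge}$, which is immediate from Lemma \ref{j2}); then $Z^{\wedge}(L) = Z^{\wedge}(T)\oplus A$, and since by the center identity $Z^{\wedge}(T)\subseteq Z(T)\subseteq T^2 = L^2$ while $A \cap L^2 = 0$, this is consistent, but to get equality $Z^{\wedge}(L)=Z^{\wedge}(T)$ as stated one reads the proposition as an identification inside $L$ after absorbing $A$; I would simply follow the convention of \cite{twoo} and phrase the conclusion as $Z^{\wedge}(L) = Z^{\wedge}(T)$ under the canonical embedding $T \hookrightarrow L$, noting the $A$-part is central and hence contributes trivially to capability questions.
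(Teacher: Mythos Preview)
Your construction of the decomposition $L=T\oplus A$ and the verification that $Z(T)=Z(L)\cap L^2$ (hence that $T$ is stem) are correct and carried out cleanly. The paper, by contrast, does not argue this at all: it simply invokes \cite[Proposition~3.1]{pair} for the decomposition, for $Z(T)=Z(L)\cap L^2$, and for $Z^{\wedge}(L)=Z^{\wedge}(T)$, and only adds the one-line observation $Z(T)\subseteq T^2$ to conclude that $T$ is stem. So on the first two points your argument is more informative than the paper's.

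There is, however, a genuine error in your treatment of $Z^{\wedge}(L)=Z^{\wedge}(T)$. The claim $A\subseteq Z^{\wedge}(L)$ is false. Under Lemma~\ref{15555} the natural map $L\wedge L\to (L/A)\wedge(L/A)\cong T\wedge T$ kills the summands $A\wedge A$ and $T^{(ab)}\otimes A$, and the latter is nonzero whenever $A\neq 0$ (since $T$ is nilpotent of class $c>1$, hence $T^{(ab)}\neq 0$). So Lemma~\ref{j2} gives exactly the opposite conclusion: $A\not\subseteq Z^{\wedge}(L)$ unless $A=0$. Your final paragraph senses the contradiction (you end up with $Z^{\wedge}(L)=Z^{\wedge}(T)\oplus A$, which is incompatible with the stated equality) and tries to escape by reinterpreting the proposition; no reinterpretation is needed, the equality $Z^{\wedge}(L)=Z^{\wedge}(T)$ holds literally as subsets of $L$.

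The correct move is to prove the containment $Z^{\wedge}(L)\subseteq T$ rather than $A\subseteq Z^{\wedge}(L)$. Write $x=t+a\in Z^{\wedge}(L)$ with $t\in T$, $a\in A$, and pick $t'\in T\setminus T^2$. In the decomposition of Lemma~\ref{15555}, the element $x\wedge t'$ has component $a\otimes\bar{t'}$ in $A\otimes T^{(ab)}$, which is nonzero unless $a=0$; hence $x\in T$. Combined with your (essentially correct) observation that for central $N\subseteq T$ one has $N\subseteq Z(L)\cap T=Z(T)\subseteq T^2$, so that the $A\wedge A$ and $T^{(ab)}\otimes A$ summands are unaffected by passing to $L/N$, you obtain $Z^{\wedge}(L)\cap T=Z^{\wedge}(T)$, and together these give $Z^{\wedge}(L)=Z^{\wedge}(T)$.
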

\begin{proof} By using \cite[Proposition 3.1]{pair}, $Z(T)=Z(L)\cap L^2$ and $ L\cong T\oplus A. $ Thus $L^2=T^2$ and so $Z(T)=Z(L)\cap L^2\subseteq T^2.$ Hence $T$ is a stem. The rest of proof obtained directly by using  \cite[Proposition 3.1]{pair}.
\end{proof}
The nilpotency class of a Lie algebra $L$ with the derived subalgebra of dimension two is $ 2 $ or $ 3.$ Now for a Lie algebra $L$ of class two, by using Proposition \ref{811}, we have $L\cong T\oplus A$ for a stem Lie algebra and an abelian Lie algebra $A.$
  Therefore when $L$ is a nilpotent Lie algebra of class two, $T$ should be a generalized Heisenberg of rank $m$ (a Lie algebra such that $ T^2=Z(T) $ and $ \dim T^2=m$).

The following result gives the Schur multiplier of a non-capable Heisenberg Lie algebra.
 \begin{prop}\cite[Proposition 3.1]{ni4}\label{12}
Let $H$ be a non-capable generalized Heisenberg Lie algebra of rank $ 2 $ such that $ \dim H=n. $ Then
$\dim\mathcal{M}(H)=\dfrac{1}{2} (n-3)(n-2)-2$, or  $\dim\mathcal{M}(H)=\dfrac{1}{2} (n-4)(n-1)+1=\dfrac{1}{2}(n-3)(n-2).$
\end{prop}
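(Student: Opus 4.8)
The plan is to reduce to the derived‑dimension‑one case by dividing out a one–dimensional ideal contained in the epicenter, and then to compute using Proposition \ref{d} and Proposition \ref{dd}.

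First I would record that, since $H$ is a generalized Heisenberg Lie algebra of rank $2$, by definition $H^2=Z(H)$ and $\dim H^2=2$; and since $H$ is non‑capable, $Z^{*}(H)=Z^{\wedge}(H)\neq 0$. Because the exterior center is always contained in the center, $Z^{\wedge}(H)\subseteq Z(H)=H^2$, so I may choose a one‑dimensional ideal $I$ of $H$ with $I\subseteq Z^{*}(H)\subseteq H^2$. Then $I$ is a central ideal, $H^2\cap I=I$ is one‑dimensional, and $\dim(H/I)^2=\dim(H^2/I)=1$. Hence $H/I$ is a non‑abelian nilpotent Lie algebra of dimension $n-1$ whose derived subalgebra has dimension one, so Lemma \ref{5} gives $H/I\cong H(m)\oplus A(n-2m-2)$ for some $m\geq 1$.

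Next I would invoke Proposition \ref{dd}$(ii)$: since $I\subseteq Z^{*}(H)$,
\[\dim\mathcal{M}(H)=\dim\mathcal{M}(H/I)-\dim(H^2\cap I)=\dim\mathcal{M}(H/I)-1.\]
Substituting the value of $\dim\mathcal{M}(H/I)$ given by Proposition \ref{d} for the ambient dimension $n-1$, namely $\dfrac{1}{2}(n-2)(n-3)+1$ when $m=1$ and $\dfrac{1}{2}(n-2)(n-3)-1$ when $m\geq 2$, I obtain
\[\dim\mathcal{M}(H)=\dfrac{1}{2}(n-2)(n-3)=\dfrac{1}{2}(n-4)(n-1)+1\qquad(m=1),\]
\[\dim\mathcal{M}(H)=\dfrac{1}{2}(n-3)(n-2)-2\qquad(m\geq 2),\]
which is exactly the asserted dichotomy.

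I do not expect a genuine obstacle here: the two substantive ingredients — the structure theorem in derived dimension one (Lemma \ref{5}) and the epicenter criterion (Proposition \ref{dd}) — are already available, and what remains is the arithmetic above. The one point to treat with a little care is the legitimacy of the choice of a one‑dimensional $I\subseteq Z^{*}(H)$, which is precisely the inclusion $Z^{\wedge}(H)\subseteq Z(H)=H^2$ together with $\dim H^2=2$; and, since the left‑hand side of the displayed identity does not depend on $I$, neither does $\dim\mathcal{M}(H/I)$, so the case distinction $m=1$ versus $m\geq 2$ is an invariant of $H$ and the statement is well posed.
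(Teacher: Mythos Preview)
The paper does not supply its own proof here; the proposition is simply quoted from \cite[Proposition~3.1]{ni4}. Your argument is correct and is precisely the strategy the paper itself employs a few lines later in proving Theorem~\ref{121} for the class-$3$ stem case: pick a one-dimensional ideal $I\subseteq Z^{*}(H)$, apply Proposition~\ref{dd}$(ii)$ to get $\dim\mathcal{M}(H)=\dim\mathcal{M}(H/I)-1$, and evaluate the right-hand side via Lemma~\ref{5} and Proposition~\ref{d}. The arithmetic checks, and your remark that $\dim\mathcal{M}(H/I)$ is independent of the choice of $I$ (hence the $m=1$ versus $m\geq 2$ dichotomy is intrinsic to $H$) is a nice touch.
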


Proposition \ref{gk} shows that all  $ n$-dimensional stem Lie algebras $ T $ of class $ 3 $ when $n\geq 6  $ and  $ \dim T^2=2$ are non-capable. Therefore the Schur multiplier of non-capable stem Lie algebra $T$ of class $3$ with $\dim T^2=2$ is obtained as following.
\begin{thm}\label{121}
Let $T$ be an $ n$-dimensional stem Lie algebra of class $ 3 $ such that  $ \dim T^2=2.$ Then for all $n\geq 6,$ we have
\[\mathcal{M}(T)\cong A(\dfrac{1}{2}(n-3)(n-2)).\]
\end{thm}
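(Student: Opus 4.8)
The plan is to reduce everything to the structural data supplied by Lemma \ref{ggg} and then run an induction-free dimension count using Proposition \ref{dd} together with Proposition \ref{gk}. By Lemma \ref{ggg}, for an $n$-dimensional stem Lie algebra $T$ of class $3$ with $\dim T^2=2$ we have $Z(T)=T^3\cong A(1)$ and $T/Z(T)\cong H(1)\oplus A(n-4)$. Taking $I=Z(T)=T^3$, which is a one-dimensional central ideal contained in $T^2$, Proposition \ref{dd}$(i)$ gives
\[
\dim\mathcal{M}(T)\geq \dim\mathcal{M}(T/Z(T))-\dim(T^2\cap Z(T))=\dim\mathcal{M}\bigl(H(1)\oplus A(n-4)\bigr)-1.
\]
Now Proposition \ref{d}$(i)$ (with the role of $n$ there played by $n-1$, since $H(1)\oplus A(n-4)$ has dimension $n-1$ and derived subalgebra of dimension one) yields $\dim\mathcal{M}(H(1)\oplus A(n-4))=\tfrac12(n-1)(n-4)+2=\tfrac12(n-2)(n-3)+1$; alternatively one can assemble this from Lemma \ref{1} and Lemma \ref{ss}. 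Hence $\dim\mathcal{M}(T)\geq \tfrac12(n-2)(n-3)$.

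For the reverse inequality I would invoke Proposition \ref{gk}: for $n\geq 6$ the algebra $T$ is non-capable, and moreover unicentral, so $Z^{*}(T)=Z^{\wedge}(T)=Z(T)$. In particular the one-dimensional central ideal $I=Z(T)$ satisfies $I\subseteq Z^{*}(T)$, so the equality case of Proposition \ref{dd}$(ii)$ applies and forces
\[
\dim\mathcal{M}(T)=\dim\mathcal{M}(T/Z(T))-\dim(T^2\cap Z(T))=\tfrac12(n-2)(n-3)+1-1=\tfrac12(n-2)(n-3).
\]
This pins down the dimension exactly. It remains only to observe that $\mathcal{M}(T)$ is abelian — but the Schur multiplier of any Lie algebra is by construction an abelian Lie algebra (it is a subquotient $R\cap F^2/[R,F]$ of the abelian algebra $F^2/[F,F]\cap\cdots$, or more simply it sits inside $L\wedge L$ as the kernel of $\kappa'$ and one checks it is central there), so $\mathcal{M}(T)\cong A(\tfrac12(n-2)(n-3))=A(\tfrac12(n-3)(n-2))$, as claimed.

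The only genuine subtlety, and the step I would be most careful about, is the bookkeeping in the dimension formula: one must feed the correct dimension ($n-1$, not $n$) into Proposition \ref{d} for the quotient $T/Z(T)\cong H(1)\oplus A(n-4)$, and one must verify that $T^2\cap Z(T)=Z(T)$ has dimension exactly $1$ — this is immediate from Lemma \ref{ggg} since $Z(T)=T^3\subseteq T^2$. A secondary point worth a sentence is that the hypothesis $n\geq 6$ is used exactly once, namely to guarantee via Proposition \ref{gk} that $T$ is non-capable so that the sharp equality in Proposition \ref{dd}$(ii)$ is available; for $n=4,5$ the class-$3$ stem algebras $L_{4,3}$ and $L_{5,5}$ are capable and the formula is replaced by the values recorded in Proposition \ref{mul}. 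No delicate case analysis is needed beyond this, since Lemma \ref{ggg} already exhibits the common quotient structure of all such $T$ uniformly in $n$.
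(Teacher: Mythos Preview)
Your proof is correct and follows essentially the same route as the paper: identify $Z(T)=T^3\cong A(1)$ and $T/Z(T)\cong H(1)\oplus A(n-4)$ via Lemma~\ref{ggg}, invoke Proposition~\ref{gk} for $n\geq 6$ to get $Z^{*}(T)=Z(T)$, and then apply the equality case of Proposition~\ref{dd}$(ii)$ together with Proposition~\ref{d}$(i)$ (with $n-1$ in place of $n$) to obtain $\dim\mathcal{M}(T)=\tfrac12(n-1)(n-4)+1=\tfrac12(n-3)(n-2)$. The paper's proof skips your separate lower-bound step from Proposition~\ref{dd}$(i)$ and goes directly to the equality, but the substance is identical.
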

\begin{proof}
By Proposition \ref{gk} and Lemma \ref{ggg}, $ T $ is non-capable and unicentral and so $ Z^{*} (T)=Z(T)\cong A(1).$
Using Proposition \ref{dd} $ (ii),$ we have $ \dim\mathcal{M}(T)=\dim\mathcal{M}(T/Z^{*} (T))-1.$ By Lemma \ref{ggg}, we have  $T/Z^{*} (T)\cong H(1) \oplus A(n-4).$ Now, Proposition \ref{d} $ (i) $ implies
 \[\dim\mathcal{M}(T)=\dfrac{1}{2} (n-1)(n-4)+1=\dfrac{1}{2}(n-3)(n-2),\]as required.
\end{proof}

In the next proposition, we obtain the corank of a non-capable stem Lie algebra with the derived subalgebra of dimension $ 2. $
\begin{prop}
Let $ T $ be a  non-capable $ n$-dimensional nilpotent stem Lie algebra such that  $ \dim T^2=2.$ Then
\begin{itemize}
\item[$(i)$] If  $cl(T)=2,$ then $t(T)=2n-1 ~~\text{or} ~~t(T)=2n-3.$
\item[$(ii)$]  If $cl(T)=3,$ then $t(T)=2n-3.$
\end{itemize}
\end{prop}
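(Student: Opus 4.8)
The plan is to read off both parts directly from the Schur--multiplier computations already in hand, since $t(T)$ is by definition $\frac{1}{2}n(n-1)-\dim\mathcal{M}(T)$. The only arithmetic needed is the identity $\frac{1}{2}n(n-1)-\frac{1}{2}(n-3)(n-2)=2n-3$, which enters in both cases.

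For part $(ii)$, I would first invoke Proposition \ref{gk}: a stem Lie algebra of class $3$ with $\dim T^2=2$ is non-capable precisely when $n\geq 6$, so the hypothesis that $T$ is non-capable forces $n\geq 6$ and puts us in the range where Theorem \ref{121} applies. Theorem \ref{121} then gives $\dim\mathcal{M}(T)=\frac{1}{2}(n-3)(n-2)$, and substituting into the definition of the corank yields $t(T)=\frac{1}{2}n(n-1)-\frac{1}{2}(n-3)(n-2)=2n-3$.

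For part $(i)$, since $cl(T)=2$ and $T$ is stem with $\dim T^2=2$, we have $T^2\subseteq Z(T)$ (class $2$) and $Z(T)\subseteq T^2$ (stem), hence $T^2=Z(T)$; thus, as observed in the paragraph following Proposition \ref{811}, $T$ is a generalized Heisenberg Lie algebra of rank $2$. Being non-capable, $T$ falls under Proposition \ref{12}, which gives the two possibilities $\dim\mathcal{M}(T)=\frac{1}{2}(n-3)(n-2)-2$ and $\dim\mathcal{M}(T)=\frac{1}{2}(n-3)(n-2)$. Plugging each into $t(T)=\frac{1}{2}n(n-1)-\dim\mathcal{M}(T)$ and using the identity above gives $t(T)=2n-1$ in the first case and $t(T)=2n-3$ in the second, which is exactly the claimed dichotomy.

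There is no substantial obstacle: the statement is a bookkeeping consequence of Proposition \ref{gk}, Theorem \ref{121} and Proposition \ref{12}. The only points needing a line of justification are $(a)$ that non-capability pins down $n\geq 6$ in the class-$3$ case, so that Theorem \ref{121} is available, and $(b)$ that a class-$2$ stem algebra with two-dimensional derived subalgebra is genuinely a rank-$2$ generalized Heisenberg algebra, so that Proposition \ref{12} applies verbatim.
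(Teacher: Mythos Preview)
Your proposal is correct and follows essentially the same route as the paper: invoke Proposition \ref{12} for part $(i)$ and Theorem \ref{121} for part $(ii)$, then subtract from $\tfrac{1}{2}n(n-1)$. Your write-up is in fact more careful than the paper's, which omits the justifications you flag in (a) and (b) and even contains a typo in part $(ii)$ (it writes $2n-1$ where your $2n-3$ is the correct value).
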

\begin{proof}
 $(i).$ We know that \[t(T)= \dfrac{1}{2} n(n-1)-\dim\mathcal{M}(T).\]
By Proposition \ref{12}, we have \[t(T)= \dfrac{1}{2} n(n-1)-\dfrac{1}{2} (n-3)(n-2)+2\] or \[t(T)= \dfrac{1}{2} n(n-1)-\dfrac{1}{2} (n-4)(n-1)-1.\] Therefore
\[t(T)=2n-1 ~~\text{or} ~~t(T)=2n-3.\]
 $(ii).$ Similar to the case $(i),$ by using Theorem \ref{121}, we can obtain that $ t(T)=2n-1.$
\end{proof}
\begin{thm}\label{h4}
 Let $L$ be a non-capable  $n$-dimensional Lie algebra of class two such that   $ \dim (L/Z(L))=m $ and $ \dim L^2= 2.$ Then   $L\cong H\oplus A(n-m-2) $,  where $ H $ is a non-capable generalized Heisenberg of rank two and $ \mathcal{M}(L)\cong A(\dfrac{1}{2}(n-3)(n-2)-2) $ or  $ \mathcal{M}(L)\cong A(\dfrac{1}{2}(n-3)(n-2)).$
\end{thm}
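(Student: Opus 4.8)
The plan is to peel off an abelian direct summand so that $L$ becomes a generalized Heisenberg algebra of rank two plus an abelian piece, and then to assemble the already-known Schur multiplier formulas by a short computation. First, since $cl(L)=2>1$, I would invoke Proposition \ref{811} to write $L\cong T\oplus A$ with $A$ abelian and $T$ stem, keeping the extra information $Z^{\wedge}(L)=Z^{\wedge}(T)$. Because $L$ is non-capable, $Z^{\wedge}(L)\neq 0$, hence $Z^{\wedge}(T)\neq 0$ and $T$ is non-capable as well. Now $L^2=T^2$ has dimension $2$, and $cl(T)=2$ forces $T^2\subseteq Z(T)$; combined with $Z(T)\subseteq T^2$ (as $T$ is stem) this gives $Z(T)=T^2$, so $T$ is exactly a non-capable generalized Heisenberg Lie algebra $H$ of rank $2$.

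Next I would do the dimension bookkeeping. From $L\cong T\oplus A$ one gets $Z(L)=Z(T)\oplus A$, so $L/Z(L)\cong T/Z(T)=H/H^2$, which has dimension $\dim H-2=m$; therefore $\dim H=m+2$ and $\dim A=n-(m+2)=n-m-2$, which already yields $L\cong H\oplus A(n-m-2)$. Applying Lemma \ref{1} then gives
\[\mathcal{M}(L)\cong \mathcal{M}(H)\oplus \mathcal{M}(A(n-m-2))\oplus\bigl(H^{(ab)}\otimes A(n-m-2)^{(ab)}\bigr),\]
which is abelian, so only dimensions matter. Here $\dim\mathcal{M}(A(n-m-2))=\frac{1}{2}(n-m-2)(n-m-3)$ by Lemma \ref{ss}$(i)$, and $\dim\bigl(H^{(ab)}\otimes A(n-m-2)^{(ab)}\bigr)=m(n-m-2)$ since $\dim H^{(ab)}=\dim H-2=m$, while substituting $\dim H=m+2$ into Proposition \ref{12} gives $\dim\mathcal{M}(H)=\frac{1}{2}m(m-1)-2$ or $\dim\mathcal{M}(H)=\frac{1}{2}m(m-1)$.

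Finally I would run the arithmetic: setting $p=m$ and $q=n-m-2$, so that $p+q=n-2$, one has
\[\frac{1}{2}p(p-1)+\frac{1}{2}q(q-1)+pq=\frac{1}{2}\bigl((p+q)^2-(p+q)\bigr)=\frac{1}{2}(n-2)(n-3),\]
and therefore $\dim\mathcal{M}(L)=\frac{1}{2}(n-3)(n-2)-2$ or $\frac{1}{2}(n-3)(n-2)$, giving the two stated possibilities for $\mathcal{M}(L)$.

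The computation itself is routine; the two places needing care are Step one — checking that the non-capable stem summand is \emph{precisely} a rank-two generalized Heisenberg algebra, which is what makes Proposition \ref{12} applicable and identifies the summand $H$ in the statement — and the observation at the last step that the three contributions telescope cleanly into $\frac{1}{2}(n-2)(n-3)$, with the two branches of Proposition \ref{12} producing the same $q$-dependent term and differing only by the additive constant $-2$.
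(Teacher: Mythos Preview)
Your argument is correct and follows essentially the same route as the paper: split off an abelian summand via Proposition~\ref{811}, identify the stem piece as a (non-capable) generalized Heisenberg algebra of rank~$2$ and dimension~$m+2$, then combine Proposition~\ref{12} with Lemmas~\ref{1} and~\ref{ss} and simplify. If anything, you are more explicit than the paper in justifying that the stem summand inherits non-capability (via $Z^{\wedge}(L)=Z^{\wedge}(T)$) and in organizing the final arithmetic through the identity $\frac{1}{2}p(p-1)+\frac{1}{2}q(q-1)+pq=\frac{1}{2}(p+q)(p+q-1)$.
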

\begin{proof}
Propositions \ref{811}  implies  $ L\cong H\oplus A(n-m-2), $ where $Z(H)=H^2=L^2\cong A(2). $ Using
 Proposition \ref{12}, we have
\[\dim\mathcal{M}(H)=\dfrac{1}{2} m(m-1)-2\] or \[\dim\mathcal{M}(H)=\dfrac{1}{2} (m+1)(m-2)+1.\]
Now, Lemmas \ref{1} and \ref{ss} imply
 \begin{align*}
\dim \mathcal{M}(L)&=\dim \mathcal{M}(H)+\dim \mathcal{M}(A(n-m-2))+\dim ((H )^{(ab)} \otimes A(n-m-2)).
\end{align*}
Thus \begin{align*}&\dim \mathcal{M}(L)=\dfrac{1}{2} m(m-1)-2+\dfrac{1}{2}(n-m-2)(n-m-3)+m(n-m-2)\\&=\dfrac{1}{2} m(m-1)+\dfrac{1}{2}(n-m-2)(n+m-3)-2=
\dfrac{1}{2}(n-2)(n-3)-2\end{align*}or
\begin{align*}&\dim \mathcal{M}(L)=\dfrac{1}{2} (m+1)(m-2)+1+\dfrac{1}{2}(n-m-2)(n-m-3)+m(n-m-2)\\=&\dfrac{1}{2} (m+1)(m-2)+\dfrac{1}{2}(n-m-2)(n+m-3)+1=\dfrac{1}{2}(n-2)(n-3).\end{align*}
 Hence the result follows.
\end{proof}
\begin{thm}\label{h44}
  Let $L$ be a non-capable  $n$-dimensional Lie algebra of class $ 3 $ such that  $ \dim (L/Z(L))=m $ and $ \dim L^2= 2.$ Then   $L\cong T\oplus A(n-m-1) $ and  $ \mathcal{M}(L)\cong A(\dfrac{1}{2}(n-2)(n-3)). $
\end{thm}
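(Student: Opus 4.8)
The plan is to mirror the structure of the proof of Theorem \ref{h4}, but now feeding in the class-$3$ data instead of the generalized Heisenberg data. First I would invoke Proposition \ref{811}: since $L$ is nilpotent of class $3$ (in particular $c>1$), we may write $L\cong T\oplus A$ with $T$ stem and $A$ abelian, and $Z(T)=Z(L)\cap L^2$, $T^2=L^2$. Because $\dim L^2=2$ and $L$ has class $3$, the summand $T$ is an $n'$-dimensional stem Lie algebra of class $3$ with $\dim T^2=2$; Lemma \ref{ggg} gives $Z(T)=T^3\cong A(1)$ and $T/Z(T)\cong H(1)\oplus A(n'-4)$. A dimension count pins down $A$: since $\dim(L/Z(L))=m$ and (using Proposition \ref{811}, $Z^\wedge(L)=Z^\wedge(T)$ together with unicentrality from Proposition \ref{gk}) one checks $Z(L)=Z(T)\oplus A$, so $\dim(L/Z(L))=\dim(T/Z(T))=m$ forces $\dim T = m+1$ and hence $A\cong A(n-m-1)$, i.e. $L\cong T\oplus A(n-m-1)$ with $T$ of dimension $m+1$.

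Next I would compute $\dim\mathcal{M}(T)$. Since $L$ is assumed non-capable and of class $3$ with $\dim L^2=2$, Proposition \ref{gk} applies (for the relevant range $m+1\geq 6$; the small cases where $T$ would be capable are excluded by the non-capability hypothesis, exactly as the parallel step in Theorem \ref{h4} tacitly handles the non-capable regime), so Theorem \ref{121} gives $\dim\mathcal{M}(T)=\tfrac12(m-1)(m-2)$. Then I would apply Lemma \ref{1} with $A=T$, $B=A(n-m-1)$:
\begin{align*}
\dim\mathcal{M}(L)&=\dim\mathcal{M}(T)+\dim\mathcal{M}(A(n-m-1))+\dim\bigl(T^{(ab)}\otimes A(n-m-1)\bigr)\\
&=\tfrac12(m-1)(m-2)+\tfrac12(n-m-1)(n-m-2)+(m-1)(n-m-1).
\end{align*}
Here I used $\dim T^{(ab)}=\dim(T/T^2)=(m+1)-2=m-1$ and Lemma \ref{ss}$(i)$. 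The remaining work is the routine algebraic identity
\[
\tfrac12(m-1)(m-2)+\tfrac12(n-m-1)(n-m-2)+(m-1)(n-m-1)=\tfrac12(n-2)(n-3),
\]
which one verifies by expanding: the cross term $(m-1)(n-m-1)$ combines with the two halved quadratics to telescope the $m$-dependence away, leaving $\tfrac12\bigl((m-1)+(n-m-1)\bigr)\bigl((m-1)+(n-m-1)-1\bigr)=\tfrac12(n-2)(n-3)$. Since $\mathcal{M}(T)$, $\mathcal{M}(A(n-m-1))$ and the tensor term are all abelian, $\mathcal{M}(L)$ is abelian, so $\mathcal{M}(L)\cong A\bigl(\tfrac12(n-2)(n-3)\bigr)$.

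The main obstacle I anticipate is not the arithmetic but making the reduction to Theorem \ref{121} fully rigorous: I must be sure that the non-capability of $L$ together with $\dim L^2=2$ and $cl(L)=3$ genuinely forces $\dim T\geq 6$, so that Proposition \ref{gk} and hence Theorem \ref{121} apply to $T$. This is where Proposition \ref{811} is essential — it tells us $Z^\wedge(L)=Z^\wedge(T)$, so $L$ non-capable is equivalent to $T$ non-capable, and then Proposition \ref{gk} (which characterizes capability of such $T$ by $\dim T<6$) closes the gap. The only other point needing a line of care is the identification $\dim(L/Z(L))=\dim(T/Z(T))$, which follows since $Z(L)=Z(T)\oplus A$ as $A$ is a central direct summand and $Z(T)=Z(L)\cap L^2=Z(L)\cap T^2$.
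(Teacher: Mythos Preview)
Your proposal is correct and follows essentially the same approach as the paper: decompose $L\cong T\oplus A(n-m-1)$ via Proposition~\ref{811}, compute $\dim\mathcal{M}(T)$ from Theorem~\ref{121}, and then assemble $\dim\mathcal{M}(L)$ using Lemma~\ref{1}. In fact you are more careful than the paper on two points the paper leaves implicit --- the justification that $\dim T=m+1$ via $Z(L)=Z(T)\oplus A$, and the reduction of non-capability of $L$ to non-capability of $T$ (hence $\dim T\geq 6$) via $Z^{\wedge}(L)=Z^{\wedge}(T)$ and Proposition~\ref{gk}.
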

\begin{proof}
Lemma \ref{ggg}, Propositions  \ref{gk} and \ref{811}    imply  $ L\cong T\oplus A(n-m-1), $ where    $Z(T)=L^2\cap Z(L)\cong A(1). $
 Theorem \ref{121} implies
 \[\dim\mathcal{M}(T)=\dfrac{1}{2} m(m-3)+1.\]
Using Lemmas \ref{1} and \ref{ss}, we have
 \begin{align*}
\dim \mathcal{M}(L)&=\dim \mathcal{M}(T)+\dim \mathcal{M}(A(n-m-1))+\dim ((T )^{(ab)} \otimes A(n-m-1)).
\end{align*}
Therefore
\begin{align*}
&\dim \mathcal{M}(L)=\dfrac{1}{2} m(m-3)+1+\dfrac{1}{2}(n-m-1)(n-m-2)+(m-1)(n-m-1)=\\& \dfrac{1}{2} m(m-3)+\dfrac{1}{2}(n-m-1)(n+m-4)+1=\dfrac{1}{2}(n-2)(n-3).\end{align*}
 The result follows.
\end{proof}
\begin{cor}
Let $ L$ be a  non-capable $ n$-dimensional nilpotent  Lie algebra such that  $ \dim L^2=2.$ Then
\begin{itemize}
\item[$(i)$] If  $cl(L)=2,$ then $t(L)=2n-1 ~~\text{or} ~~t(L)=2n-3.$
\item[$(ii)$]  If $cl(L)=3,$ then $t(L)=2n-3.$
\end{itemize}
\end{cor}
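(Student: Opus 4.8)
The plan is to read off the dimension of the Schur multiplier from the two preceding theorems and substitute it into the defining formula for the corank. Recall that $t(L)=\tfrac12 n(n-1)-\dim\mathcal{M}(L)$, and that since $\dim L^2=2$ the nilpotency class of $L$ is either $2$ or $3$, so the two cases below are exhaustive.

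For case $(i)$, suppose $cl(L)=2$. Setting $m=\dim(L/Z(L))$, Theorem~\ref{h4} applies and gives $L\cong H\oplus A(n-m-2)$ with $H$ a non-capable generalized Heisenberg algebra of rank two, together with the two possibilities $\dim\mathcal{M}(L)=\tfrac12(n-2)(n-3)-2$ or $\dim\mathcal{M}(L)=\tfrac12(n-2)(n-3)$. I would then compute
\[
\tfrac12 n(n-1)-\tfrac12(n-2)(n-3)=\tfrac12\bigl(n^2-n-(n^2-5n+6)\bigr)=\tfrac12(4n-6)=2n-3,
\]
so that $t(L)=2n-1$ in the first subcase and $t(L)=2n-3$ in the second. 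For case $(ii)$, suppose $cl(L)=3$. Now Theorem~\ref{h44} applies and yields $L\cong T\oplus A(n-m-1)$ with $\dim\mathcal{M}(L)=\tfrac12(n-2)(n-3)$; the same arithmetic as above gives $t(L)=\tfrac12 n(n-1)-\tfrac12(n-2)(n-3)=2n-3$.

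There is essentially no real obstacle here: once Theorems~\ref{h4} and~\ref{h44} are invoked the result is a one-line substitution and a routine simplification of quadratics. The only point worth a sentence of justification is that these two theorems genuinely cover every non-capable $n$-dimensional nilpotent Lie algebra $L$ with $\dim L^2=2$ — which holds because any such $L$ has $cl(L)\in\{2,3\}$, and for each of these classes the hypothesis "$L$ non-capable, $\dim L^2=2$" is exactly the hypothesis of the corresponding theorem (with $m=\dim(L/Z(L))$ arbitrary, which the theorems allow). Hence both parts follow, completing the proof.
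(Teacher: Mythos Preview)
Your proposal is correct and follows essentially the same route as the paper: the paper's proof simply says the result follows from Theorems~\ref{h4} and~\ref{h44}, and you have made explicit the substitution of those multiplier dimensions into the definition $t(L)=\tfrac12 n(n-1)-\dim\mathcal{M}(L)$ and carried out the arithmetic.
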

\begin{proof}
The result follows by using Theorems \ref{h4} and \ref{h44}.
\end{proof}
\begin{lem}\label{fg}
Let $L$ be a nilpotent Lie algebra of class two. Then $ L\wedge L\cong \mathcal{M}(L)\oplus L^2. $
\end{lem}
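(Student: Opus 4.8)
The plan is to show that $L\wedge L$ is abelian, and then to split the exact sequence of Lemma~\ref{j1}; the direct-sum decomposition will be a formal consequence.

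Since $cl(L)=2$ we have $L^3=[L,L^2]=0$, so $L^2$ is central in $L$; in particular $L^2$ is abelian. Now I use the defining relations of the exterior square from \cite{elis1}: for all $l,l',l''\in L$ one has $[l,l']\wedge l''=l\wedge[l',l'']-l'\wedge[l,l'']$. Choosing $l''=z\in L^2$ and using $[l',z]=[l,z]=0$ gives $[l,l']\wedge z=0$. Because $L^2$ is spanned by commutators $[l,l']$ with $l,l'\in L$, bilinearity of $\wedge$ yields $w\wedge z=0$ for all $w,z\in L^2$. By Lemma~\ref{j2222}, $(L\wedge L)^2=\langle w\wedge z\mid w,z\in L^2\rangle$, so $(L\wedge L)^2=0$; that is, $L\wedge L$ is abelian.

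It remains to observe that this forces the conclusion. By Lemma~\ref{j1} there is a short exact sequence $0\to\mathcal{M}(L)\to L\wedge L\xrightarrow{\kappa'}L^2\to 0$. Since $L\wedge L$ is abelian, so are $\mathcal{M}(L)$ (a subalgebra) and $L^2$, and the sequence is one of vector spaces; choosing a linear section $s\colon L^2\to L\wedge L$ of $\kappa'$ gives $L\wedge L=\ker\kappa'\oplus s(L^2)=\mathcal{M}(L)\oplus s(L^2)$ as vector spaces. As $L\wedge L$ is abelian, both summands are ideals meeting trivially, so this is a direct sum of Lie algebras, and since $\kappa'$ restricts to an isomorphism $s(L^2)\cong L^2$ we conclude $L\wedge L\cong\mathcal{M}(L)\oplus L^2$.

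The only step requiring the hypothesis $cl(L)=2$, and the only non-formal point, is the vanishing $w\wedge z=0$ for $w,z\in L^2$; every term on the right-hand side of the exterior-square relation is a bracket against an element of $L^2\subseteq Z(L)$, so the argument is insensitive to the precise sign conventions of \cite{elis1}. Everything after that is a formal consequence of Lemma~\ref{j1} together with the fact that $L\wedge L$ is abelian.
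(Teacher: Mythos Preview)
Your proof is correct and follows essentially the same route as the paper: both arguments show $(L\wedge L)^2=0$ by expanding a wedge of two elements of $L^2$ via the exterior-square relation and using $L^2\subseteq Z(L)$, then invoke Lemma~\ref{j1} to obtain the direct sum. Your version is simply more explicit about the use of Lemma~\ref{j2222} and about why the abelian short exact sequence splits, whereas the paper carries out the same computation in a single line and leaves the splitting implicit.
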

\begin{proof}
 Since \[[l_1\wedge l_2, l_3\wedge l_4]=[l_1, l_2]\wedge [l_4,l_3] =l_1\wedge [l_2, [l_4, l_3]]-l_2\wedge [l_1, [l_4, l_3]]=0,\] for all $ l_1,l_2, l_3, l_4 \in L,$ we have $ (L\wedge L)^2=0. $ Therefore $ L\wedge L$ is abelian. By invoking Lemma \ref{j1}, we have $L\wedge L\cong \mathcal{M}(L)\oplus L^2,$ as required.
\end{proof}
\begin{thm}\label{j22}
Let $ L $ be a nilpotent Lie algebra of class two such that $ \dim L/L^2=m $. Then
$ L\otimes L\cong L\wedge L \oplus A(\frac{1}{2}m(m+1)).$
Moreover, if $ cl(L)=2, $ then $ L\otimes L\cong \mathcal{M}(L)\oplus L^2\oplus L\square L \cong   \mathcal{M}(L)\oplus L^2\oplus A(\frac{1}{2}m(m+1)).$
\end{thm}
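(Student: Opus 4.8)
The plan is to transfer the argument of Lemma~\ref{fg} from the exterior square to the tensor square and then to pin down $L\square L$. First I would show that $L\otimes L$ is abelian. The bracket on the non-abelian tensor square obeys $[l_1\otimes l_2,l_3\otimes l_4]=[l_1,l_2]\otimes[l_4,l_3]$, and since $cl(L)\le 2$ we have $[l_4,l_3]\in Z(L)$; expanding by the defining relation $[a,b]\otimes c=a\otimes[b,c]-b\otimes[a,c]$ of the tensor square gives
\[[l_1,l_2]\otimes[l_4,l_3]=l_1\otimes[l_2,[l_4,l_3]]-l_2\otimes[l_1,[l_4,l_3]]=0\]
for all $l_1,l_2,l_3,l_4\in L$, so $(L\otimes L)^2=0$. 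Consequently $L\square L$ is an abelian ideal of the abelian Lie algebra $L\otimes L$, every vector-space complement of $L\square L$ in $L\otimes L$ is an abelian subalgebra isomorphic to $(L\otimes L)/(L\square L)\cong L\wedge L$, and therefore $L\otimes L\cong(L\wedge L)\oplus(L\square L)$ as Lie algebras. This is exactly the argument of Lemma~\ref{fg}, run one level up.

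Next I would prove $\dim(L\square L)=\tfrac12 m(m+1)$, so that $L\square L\cong A(\tfrac12 m(m+1))$. The projection $L\to L^{(ab)}$ induces an epimorphism $L\otimes L\to L^{(ab)}\otimes L^{(ab)}$ restricting to an epimorphism $L\square L\to L^{(ab)}\square L^{(ab)}$; since $L^{(ab)}$ is abelian of dimension $m$, the target is the span of the diagonal tensors $\bar l\otimes\bar l$ inside the ordinary tensor square, which has dimension $\binom{m+1}{2}$, whence $\dim(L\square L)\ge\tfrac12 m(m+1)$. For the reverse bound I would fix a basis $x_1,\dots,x_m$ of a complement to $L^2$ in $L$ and, using the tensor-square relations together with the centrality of $L^2$, show that $z\otimes z'=0$ and $z\otimes l+l\otimes z=0$ for all $z,z'\in L^2$ and $l\in L$; then every generator $l\otimes l$ of $L\square L$ reduces to a combination of the $\tfrac12 m(m+1)$ elements $x_i\otimes x_i$ and $x_i\otimes x_j+x_j\otimes x_i$ ($i<j$), giving $\dim(L\square L)\le\tfrac12 m(m+1)$.

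Combining the two steps gives $L\otimes L\cong(L\wedge L)\oplus A(\tfrac12 m(m+1))$. For the ``moreover'' assertion, assume $cl(L)=2$; then Lemma~\ref{fg} yields $L\wedge L\cong\mathcal{M}(L)\oplus L^2$, and substituting this into the decomposition above while keeping the complement displayed as $L\square L$ gives $L\otimes L\cong\mathcal{M}(L)\oplus L^2\oplus L\square L\cong\mathcal{M}(L)\oplus L^2\oplus A(\tfrac12 m(m+1))$, as claimed.

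The main obstacle is the upper bound in the second step, i.e. the vanishing $z\otimes l+l\otimes z=0$ for $z\in L^2$: when $\mathrm{Char}~\mathbb{F}\neq 2$ it falls out of applying the relation $[a,b]\otimes c=a\otimes[b,c]-b\otimes[a,c]$ twice and solving the resulting identity, but in characteristic $2$ that shortcut collapses and one must argue directly from the relations, using $L^2\subseteq Z(L)$, to exclude any extra contribution to $L\square L$. Everything else is the routine translation of Lemma~\ref{fg}.
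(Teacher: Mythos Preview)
Your argument is correct and considerably more self-contained than the paper's. The paper does not carry out any of these computations: it simply invokes \cite[Lemmas~2.2 and 2.3]{ni5} for $L^{(ab)}\square L^{(ab)}\cong A(\tfrac12 m(m+1))$ and \cite[Theorem~2.5]{ni5} for the splitting $L\otimes L\cong (L\wedge L)\oplus A(\tfrac12 m(m+1))$, and then appeals to Lemma~\ref{fg} for the ``moreover'' clause. In effect the paper's proof is a three-line citation, whereas you are reproving the cited facts from scratch. Your route has the merit of making transparent exactly where the class-two hypothesis is used (namely in forcing $(L\otimes L)^2=0$, which makes the splitting of $L\square L$ automatic); the paper's route is shorter and, since \cite[Theorem~2.5]{ni5} does not require class two, keeps the first decomposition available for the class-three applications later in the paper (Theorem~\ref{jjk1} and its corollary), where your abelianness argument for $L\otimes L$ would not apply directly.

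One remark on your ``main obstacle'': the identity $z\otimes l+l\otimes z=0$ for $z\in L^2$ does not in fact need $\mathrm{Char}\,\mathbb{F}\neq 2$. Writing $z=[a,b]$, expand $l\otimes[a,b]$ via the relation $l\otimes[m,m']=[m',l]\otimes m-[m,l]\otimes m'$, and then expand each of the two resulting commutator-on-the-left terms via $[l,l']\otimes m=l\otimes[l',m]-l'\otimes[l,m]$; after cancelling $l\otimes[a,b]$ from both sides one obtains $l\otimes[a,b]=-[a,b]\otimes l$ with no division by $2$ anywhere. Hence your upper bound on $\dim(L\square L)$ goes through uniformly in all characteristics, and the caveat in your final paragraph can be dropped.
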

\begin{proof}
 Using \cite[Lemmas 2.2 and 2.3]{ni5}, we have $L/L^2\square L/L^2\cong A(\frac{1}{2}m(m+1)).$ By using \cite[Theorem 2.5]{ni5}, we conclude $ L\otimes L\cong L\wedge L \oplus A(\frac{1}{2}m(m+1)).$ If $ cl(L)=2, $ then
 \[ L\otimes L\cong \mathcal{M}(L)\oplus L^2\oplus L\square L \cong   \mathcal{M}(L)\oplus L^2\oplus (L/L^2\square L/L^2),\] by using Lemma \ref{fg}.
Hence  the result follows.
\end{proof}

\begin{cor}
Let $L$ be a non-capable $n$-dimensional Lie algebra of class two such that $ \dim L^2= 2.$
Then
$L\otimes L\cong A((n-2)^2)~\text{ or}~ L\otimes L\cong A(n^2-4n+6).$
\end{cor}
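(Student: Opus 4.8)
The plan is to feed the two structural inputs already available — Theorem \ref{h4} for $\mathcal{M}(L)$ and the class-two tensor-square formula of Theorem \ref{j22} — into a single dimension count. Since $\dim L^2 = 2$, we have $m := \dim L/L^2 = n-2$; and since $cl(L)=2$, the ``moreover'' clause of Theorem \ref{j22} applies verbatim, yielding
\[ L\otimes L \cong \mathcal{M}(L) \oplus L^2 \oplus A\!\left(\tfrac{1}{2}(n-2)(n-1)\right). \]
Each summand here is an abelian Lie algebra: $\mathcal{M}(L)$ by Theorem \ref{h4}, $L^2$ because $[L^2,L^2]\subseteq L^4 = 0$, and the last one by construction. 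Hence $L\otimes L$ is itself abelian, and is therefore determined up to isomorphism by its dimension $\dim\mathcal{M}(L) + 2 + \tfrac{1}{2}(n-2)(n-1)$.

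Next I would substitute the two possible values of $\dim\mathcal{M}(L)$ provided by Theorem \ref{h4}. If $\dim\mathcal{M}(L) = \tfrac{1}{2}(n-3)(n-2) - 2$, then
\[ \dim(L\otimes L) = \tfrac{1}{2}(n-3)(n-2) - 2 + 2 + \tfrac{1}{2}(n-2)(n-1) = \tfrac{1}{2}(n-2)\bigl((n-3)+(n-1)\bigr) = (n-2)^2. \]
If instead $\dim\mathcal{M}(L) = \tfrac{1}{2}(n-3)(n-2)$, the same arithmetic gives $\dim(L\otimes L) = (n-2)^2 + 2 = n^2-4n+6$. Since an abelian Lie algebra is classified by its dimension, we conclude $L\otimes L \cong A((n-2)^2)$ or $L\otimes L \cong A(n^2-4n+6)$, which is the claim.

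I do not expect a genuine obstacle here: the corollary is essentially bookkeeping once Theorems \ref{h4} and \ref{j22} are in hand. The only points demanding a little care are (a) checking that the hypotheses of the ``moreover'' part of Theorem \ref{j22} are actually satisfied — namely $cl(L)=2$ exactly, so that $L\otimes L$ is abelian and the $L\square L$ term is identified with $A(\tfrac{1}{2}m(m+1))$ — and (b) reading off $m = n-2$ correctly from $\dim L^2 = 2$. One may additionally remark that the two cases correspond precisely to the two families of non-capable generalized Heisenberg algebras $H$ of rank two appearing in Theorem \ref{h4}, under the decomposition $L\cong H\oplus A(n-m-2)$, but this refinement is not needed for the dimension computation.
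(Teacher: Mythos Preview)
Your proof is correct and follows exactly the paper's approach: both invoke Theorem \ref{j22} (with $m=n-2$) to write $L\otimes L \cong \mathcal{M}(L)\oplus L^2 \oplus A(\tfrac12(n-2)(n-1))$ and then substitute the two possible dimensions of $\mathcal{M}(L)$ from Theorem \ref{h4}. Your exposition is slightly more detailed in justifying that everything is abelian and in carrying out the arithmetic, but the argument is the same.
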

\begin{proof}
By using Theorems   \ref{h4} and \ref{j22}, we have
\begin{align*}
&L\otimes L \cong A(\dfrac{1}{2} (n-3)(n-2)-2)\oplus A(2)\oplus  A(\frac{1}{2}(n-2)(n-1))\cong A((n-2)^2),
\end{align*}
or
\begin{align*}
&L\otimes L \cong  A(\dfrac{1}{2} (n-3)(n-2))\oplus A(2)\oplus  A(\frac{1}{2}(n-2)(n-1))\cong A(n^2-4n+6),
\end{align*}
 as required.
\end{proof}
\begin{thm}\label{jjk1}
Let $T$ be an $ n$-dimensional stem Lie algebra of class $ 3 $ such that  $ \dim T^2=2$ and $n\geq 6.$ Then
\begin{itemize}
\item[$(i)  $]$ T\wedge T\cong \mathcal{M}(T)\oplus T^2\cong A(\dfrac{1}{2} (n-2)(n-3)+2).$
\item[$(ii)$]$T\otimes T\cong   A(n^2-4n+6).$
\end{itemize}
 \end{thm}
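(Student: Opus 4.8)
The plan is to compute $T\wedge T$ by passing to the class‑two quotient $T/T^3$ — exploiting that $T$ is unicentral — and then to read off $T\otimes T$ from $T\wedge T$ exactly as in the proof of Theorem \ref{j22}.

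For (i), I would first observe that by Lemma \ref{ggg} we have $Z(T)=T^3\cong A(1)$, while by Proposition \ref{gk} the algebra $T$ is unicentral, so $T^3=Z(T)=Z^{\wedge}(T)$. Lemma \ref{j2} applied with $N=T^3$ then yields a natural isomorphism $T\wedge T\cong (T/T^3)\wedge(T/T^3)$. Since $T/T^3\cong H(1)\oplus A(n-4)$ (again Lemma \ref{ggg}), Lemma \ref{15555} decomposes this as
\[
T\wedge T\;\cong\;\big(H(1)\wedge H(1)\big)\;\oplus\;\big(A(n-4)\wedge A(n-4)\big)\;\oplus\;\big(H(1)^{(ab)}\otimes A(n-4)^{(ab)}\big).
\]
Here $H(1)$ has class two, so by Lemma \ref{fg} the first summand equals $\mathcal{M}(H(1))\oplus H(1)^2$ and is abelian of dimension $2+1=3$ (Lemma \ref{ss}); the second summand is $\mathcal{M}(A(n-4))$ by Lemma \ref{j1} (as $A(n-4)^2=0$), abelian of dimension $\frac{1}{2}(n-4)(n-5)$ (Lemma \ref{ss}); and $H(1)^{(ab)}\otimes A(n-4)^{(ab)}\cong A(2)\otimes A(n-4)$ is abelian of dimension $2(n-4)$, as one extracts from Lemma \ref{1} together with Lemma \ref{ss}. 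Adding up, $T\wedge T$ is abelian of dimension $3+\frac{1}{2}(n-4)(n-5)+2(n-4)=\frac{1}{2}(n-2)(n-3)+2$.

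To finish (i): since $T$ has class $3$, the ideal $T^2\cong A(2)$ is abelian, and by Theorem \ref{121} $\mathcal{M}(T)\cong A(\frac{1}{2}(n-3)(n-2))$; by Lemma \ref{j1}, $\dim(T\wedge T)=\dim\mathcal{M}(T)+\dim T^2=\frac{1}{2}(n-3)(n-2)+2$, matching the count above, so the abelian algebra $T\wedge T$ is isomorphic to $\mathcal{M}(T)\oplus T^2\cong A(\frac{1}{2}(n-2)(n-3)+2)$. For (ii), put $m=\dim T/T^2=n-2$; the part of the proof of Theorem \ref{j22} preceding the class‑two case — namely the consequences of \cite[Lemmas 2.2 and 2.3]{ni5} and \cite[Theorem 2.5]{ni5}, which make no use of the class‑two hypothesis — gives $T\otimes T\cong (T\wedge T)\oplus A(\frac{1}{2}m(m+1))$. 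Invoking (i), this is abelian of dimension $\big(\frac{1}{2}(n-2)(n-3)+2\big)+\frac{1}{2}(n-2)(n-1)=(n-2)^2+2=n^2-4n+6$, hence $T\otimes T\cong A(n^2-4n+6)$.

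The step I expect to be the main obstacle is showing that $T\wedge T$ is abelian: the exact sequence $0\to\mathcal{M}(T)\to T\wedge T\to T^2\to 0$ of Lemma \ref{j1} has abelian kernel and abelian cokernel, but this does not by itself force $T\wedge T$ to be abelian, so the detour through $T/T^3$ and the explicit decomposition of Lemma \ref{15555} is essential. A minor point to verify is that the splitting $L\otimes L\cong (L\wedge L)\oplus A(\frac{1}{2}m(m+1))$ may be applied to $T$ despite $T$ having class $3$; this is harmless, since that step in the proof of Theorem \ref{j22} never appeals to the class‑two assumption.
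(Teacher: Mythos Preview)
Your proof is correct and follows essentially the same route as the paper: reduce to $T/Z(T)=T/T^3$ via unicentrality (Proposition \ref{gk} and Lemma \ref{j2}), show this quotient has abelian exterior square, then invoke Lemma \ref{j1} and Theorem \ref{121} for the dimension, and finally \cite[Theorem 2.5]{ni5} for the tensor square. The one simplification in the paper is that, instead of decomposing $(T/Z(T))\wedge(T/Z(T))$ explicitly via Lemma \ref{15555} and checking each summand, it simply notes that $T/Z(T)$ has class two and applies Lemma \ref{fg} directly to conclude abelianness --- your summand-by-summand computation is correct but unnecessary.
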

\begin{proof}
\begin{itemize}
\item[$(i)$]
By Lemma \ref{j2} and Proposition \ref{gk}, we have
$ Z^{\wedge}(T)=Z(T) $ and so $ T\wedge T \cong  T/Z(T)\wedge T /Z(T). $ Since $ cl(T/Z(T))=2, $ $ T\wedge T \cong  T/Z(T)\wedge T /Z(T) $ is abelian, by  using Lemma \ref{fg}. Now, Lemma \ref{j1} implies that $  T\wedge T\cong \mathcal{M}(T)\oplus T^2. $
Using  Theorem \ref{121}, we get
\[  T\wedge T\cong A(\dfrac{1}{2} (n-2)(n-3))\oplus A(2)\cong A(\dfrac{1}{2} (n-2)(n-3)+2).\]
\item[$(ii)$]
Using \cite[Theorem 2.5]{ni5} and  the case $ (i), $ we have $
T\otimes T\cong T\wedge T\oplus T\square T\cong   T \wedge T\oplus A(\dfrac{1}{2} (n-2)(n-1))$ and so $ T\otimes T\cong A(n^2-4n+6) .$

\end{itemize}
\end{proof}

\begin{thm}
Let $L$ be a  non-capable $n$-dimensional Lie algebra of class $ 3 $ such that   $ \dim L^2= 2.$
Then
\begin{itemize}
\item[$  (i)$]$ L\wedge L \cong \mathcal{M}(L)\oplus L^2 \cong A(\dfrac{1}{2} (n-2)(n-3)+2). $
\item[$  (ii)$]$L\otimes L\cong ( L\wedge L)\oplus (L\square L) \cong A(n^2-4n+6) .$\end{itemize}
\end{thm}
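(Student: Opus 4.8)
The plan is to reduce everything to the stem summand and run the argument of Theorem~\ref{jjk1} one more time. First I would apply Theorem~\ref{h44} to write $L\cong T\oplus A(n-m-1)$, where $T$ is an $(m+1)$-dimensional stem Lie algebra of class $3$ with $\dim T^2=2$, $Z(T)=L^2\cap Z(L)\cong A(1)$, $L^2=T^2$, and $\dim\mathcal{M}(L)=\frac{1}{2}(n-2)(n-3)$. Since $L$ is non-capable, $T$ is non-capable by Proposition~\ref{811}, and then Proposition~\ref{gk} tells us $T$ is unicentral, so $Z^{\wedge}(T)=Z(T)$; combining this with $Z^{\wedge}(L)=Z^{\wedge}(T)$ (again Proposition~\ref{811}) gives $Z(T)\subseteq Z^{\wedge}(L)$.

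For part $(i)$ the one substantive step is to show that $L\wedge L$ is abelian. I would get this from Lemma~\ref{j2} applied with $N=Z(T)$: the natural map $L\wedge L\to (L/Z(T))\wedge (L/Z(T))$ is an isomorphism. Since $Z(T)=T^3$ by Lemma~\ref{ggg}, the quotient $L/Z(T)\cong (T/T^3)\oplus A(n-m-1)$ has nilpotency class $2$, so $(L/Z(T))\wedge (L/Z(T))$ is abelian by Lemma~\ref{fg}, hence so is $L\wedge L$. Now Lemma~\ref{j1} gives the exact sequence $0\to\mathcal{M}(L)\to L\wedge L\xrightarrow{\kappa'} L^2\to 0$, a sequence of abelian Lie algebras, which therefore splits; thus $L\wedge L\cong\mathcal{M}(L)\oplus L^2$, and feeding in $\dim\mathcal{M}(L)=\frac{1}{2}(n-2)(n-3)$ and $\dim L^2=2$ yields $L\wedge L\cong A(\frac{1}{2}(n-2)(n-3)+2)$.

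For part $(ii)$ I would invoke \cite[Theorem~2.5]{ni5} to split $L\otimes L\cong (L\wedge L)\oplus(L\square L)$, and, exactly as in the proof of Theorem~\ref{jjk1}$(ii)$ (using \cite[Lemmas~2.2 and 2.3]{ni5}), identify $L\square L\cong (L/L^2)\square(L/L^2)\cong A(\frac{1}{2}(n-2)(n-1))$ since $\dim L/L^2=n-2$. Both summands are abelian, so $L\otimes L$ is abelian of dimension $\frac{1}{2}(n-2)(n-3)+2+\frac{1}{2}(n-2)(n-1)=(n-2)^2+2=n^2-4n+6$, i.e.\ $L\otimes L\cong A(n^2-4n+6)$.

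I expect the abelianness of $L\wedge L$ to be the only point requiring care, and the non-capability hypothesis is exactly what makes it work: it is what forces $Z(T)\subseteq Z^{\wedge}(L)$ and hence allows passage to the class-two quotient $L/Z(T)$. A second route to $(i)$ would be to expand $L\wedge L$ directly via Lemma~\ref{15555} as $(T\wedge T)\oplus(A\wedge A)\oplus(T^{(ab)}\otimes A^{(ab)})$ and combine Theorem~\ref{jjk1}$(i)$ with the known value $\mathcal{M}(A(k))\cong A(\frac{1}{2}k(k-1))$; this gives the same dimension count, but checking abelianness is less immediate than through the exterior-centre argument above.
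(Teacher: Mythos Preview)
Your argument is correct. Your primary route for part~$(i)$ differs from the paper's: the paper decomposes $L\wedge L$ via Lemma~\ref{15555} as $(T\wedge T)\oplus(A\wedge A)\oplus(T^{(ab)}\otimes A^{(ab)})$ and then invokes Theorem~\ref{jjk1}$(i)$ to see that each summand, hence the whole, is abelian. You instead rerun the exterior-centre argument of Theorem~\ref{jjk1} directly for $L$, using $Z^{\wedge}(L)=Z^{\wedge}(T)=Z(T)$ to pass to the class-two quotient $L/Z(T)$. Both are short and valid; the paper's route packages the reduction to $T$ once and for all, while yours avoids the direct-sum lemma at the cost of repeating the quotient trick. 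Incidentally, your closing remark that abelianness is ``less immediate'' via Lemma~\ref{15555} is a bit unfair: once Theorem~\ref{jjk1}$(i)$ gives $T\wedge T$ abelian, the direct sum of abelian Lie algebras is abelian, so that route is just as clean. For part~$(ii)$ both you and the paper appeal to the same splitting $L\otimes L\cong (L\wedge L)\oplus(L\square L)$ from \cite{ni5}; your explicit citation of \cite[Lemmas~2.2, 2.3 and Theorem~2.5]{ni5} is in fact more precise than the paper's appeal to Theorem~\ref{j22}, whose stated hypothesis is class two.
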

\begin{proof}
By Lemma \ref{15555}, Theorems \ref{h44} and \ref{jjk1},  $L\wedge L$ is abelian and so  $ L\wedge L\cong  \mathcal{M}(L)\oplus L^2. $
 Thus
\begin{align*}
&L\wedge L \cong \mathcal{M}(L)\oplus L^2 \cong A(\dfrac{1}{2} (n-2)(n-3)+2).
\end{align*}
 By using case $ (i) $ and  Theorem \ref{j22},  we have
\begin{align*}
&L\otimes L \cong  A(\dfrac{1}{2} (n-2)(n-3)+2))\oplus A(\dfrac{1}{2}(n-2)(n-1))\cong A(n^2-4n+6),
\end{align*}
 as required.
\end{proof}
We determined the Schur multiplier, the tensor square, the exterior square and the corank of all nilpotent Lie algebras with the derived subalgebra of dimension $2.$ Now we are going to find the same for a non-capable Lie algebra with the derived subalgebra of dimension $1.$
\begin{thm}
Let $L$ be a non-capable $n$-dimensional Lie algebra of class two such that    $ \dim L^2= 1.$ Then $t(L)=n,$ $ L \wedge L \cong A(\dfrac{1}{2}(n-1)(n-2))$ and $  L \otimes L \cong A( (n-1)^2).$
\end{thm}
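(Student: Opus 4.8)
The plan is to reduce the whole statement to the classification of Lie algebras with one‑dimensional derived subalgebra together with the class‑two formulas already proved in this section. First I would invoke Lemma \ref{5}: since $\dim L^2=1$, we have $L\cong H(m)\oplus A(n-2m-1)$ for some $m\geq 1$, and the same lemma records that $L$ is capable if and only if $m=1$. As $L$ is assumed non-capable, $m\geq 2$, so the second branch of Proposition \ref{d} applies and gives $\dim\mathcal{M}(L)=\tfrac12(n-1)(n-2)-1$. The corank is then immediate from its definition: $t(L)=\tfrac12 n(n-1)-\dim\mathcal{M}(L)=\tfrac12 n(n-1)-\tfrac12(n-1)(n-2)+1$, and factoring out $\tfrac12(n-1)$ collapses this to $t(L)=(n-1)+1=n$.

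For the exterior square I would first note that $L$ has nilpotency class exactly two: $L$ is non-abelian because $L^2\neq 0$, and $\dim L^2=1$ forces $L^2\subseteq Z(L)$ by nilpotency. Hence Lemma \ref{fg} applies and yields $L\wedge L\cong \mathcal{M}(L)\oplus L^2$. Since $L^2\cong A(1)$, this is $A\!\left(\tfrac12(n-1)(n-2)-1\right)\oplus A(1)\cong A\!\left(\tfrac12(n-1)(n-2)\right)$, as claimed. For the tensor square I would apply Theorem \ref{j22} with $m:=\dim L/L^2=n-1$; since $cl(L)=2$ it gives $L\otimes L\cong \mathcal{M}(L)\oplus L^2\oplus A\!\left(\tfrac12(n-1)n\right)$, a direct sum of abelian Lie algebras and hence abelian. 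Summing dimensions, $\left(\tfrac12(n-1)(n-2)-1\right)+1+\tfrac12(n-1)n=\tfrac12(n-1)\big((n-2)+n\big)=(n-1)^2$, so $L\otimes L\cong A\!\left((n-1)^2\right)$.

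There is essentially no obstacle here; the argument is pure bookkeeping once the right lemmas are assembled. The only two points that deserve a moment's care are: confirming that non-capability forces $m\geq 2$, so that the correct branch of Proposition \ref{d} is invoked, and recording $\dim L/L^2=n-1$ so that the exponent fed into Theorem \ref{j22} is correct. Everything else reduces to the elementary simplifications displayed above.
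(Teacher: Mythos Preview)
Your proposal is correct and follows exactly the route the paper takes: the paper's proof simply cites Lemma~\ref{5}, Proposition~\ref{d}, Lemma~\ref{fg}, and Theorem~\ref{j22}, and you have just unpacked that citation and carried out the arithmetic explicitly. The only superfluous step is your verification that $cl(L)=2$, which is already part of the hypothesis.
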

\begin{proof}
The result follows from Lemma  \ref{5}, Proposition \ref{d},  Lemma \ref{fg} and Theorem \ref{j22}.
\end{proof}
The corank, the Schur multiplier, the exterior square and the tensor square of  the class of all non-abelian  nilpotent Lie algebras with the derived subalgebra at most 2 are computed when they are non-capable. Here we are going to obtain the same results for all capable Lie algebras in this class.

We need the following result for the next investigation.
\begin{thm}\label{mull}
We have
\begin{itemize}
\item[(a)]$ L_{5,8}\wedge L_{5,8}\cong A(8) $ and $ L_{5,8}\otimes L_{5,8}\cong A(14). $
\item[(b)]$L_{6,22}(\epsilon)\wedge L_{6,22}(\epsilon)\cong A(10) $ and $ L_{6,22}(\epsilon)\otimes L_{6,22}(\epsilon)\cong A(20). $
\item[(c)]$L_{6,7}^{(2)}(\eta)\wedge L_{6,7}^{(2)}(\eta)\cong A(10) $ and $ L_{6,7}^{(2)}(\eta)\otimes L_{6,7}^{(2)}(\eta)\cong A(20). $
\item[(d)]$ L_1\wedge L_1\cong A(11) $ and $ L_1\otimes L_1\cong A(26). $
\item[(e)]$ L_{4,3}\wedge L_{4,3}\cong A(4) $ and $ L_{4,3}\otimes L_{4,3}\cong A(7). $
\item[(f)]$ L_{5,5}\wedge L_{5,5}\cong A(6) $ and $ L_{5,5}\otimes L_{5,5}\cong A(12). $
\end{itemize}
\end{thm}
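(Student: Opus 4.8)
The plan is to split the six algebras according to their nilpotency class and to reduce the computation of $L\wedge L$ and $L\otimes L$ to the Schur multiplier dimensions recorded in Proposition \ref{mul}.

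First I would treat the four algebras of class two: $L_{5,8}$, $L_{6,22}(\epsilon)$, $L_{6,7}^{(2)}(\eta)$ and $L_1$. Each is non-abelian of class exactly two with $\dim L^2=2$, and from the given presentations $\dim L/L^2=3,4,4,5$ respectively. By Lemma \ref{fg}, $L\wedge L\cong\mathcal{M}(L)\oplus L^2$, so combining with $\dim\mathcal{M}(L)=6,8,8,9$ from Proposition \ref{mul} yields $A(8),A(10),A(10),A(11)$. Then Theorem \ref{j22} with $m=\dim L/L^2$ gives $L\otimes L\cong(L\wedge L)\oplus A(\tfrac12 m(m+1))$, so one adds $A(6),A(10),A(10),A(15)$ and obtains $A(14),A(20),A(20),A(26)$. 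This part is pure bookkeeping.

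The delicate cases are $L_{4,3}$ and $L_{5,5}$, which have class three. Theorem \ref{jjk1} does not help here, since it requires $n\ge6$, and in fact both algebras are capable, so its non-capability device is unavailable; one must argue directly that $L\wedge L$ is abelian. By Lemma \ref{j2222}, $(L\wedge L)^2=\langle l\wedge l'\mid l,l'\in L^2\rangle$, which, since $\dim L^2=2$, is spanned by the single element $u\wedge v$ for a basis $\{u,v\}$ of $L^2$. Taking $u=[x_1,x_2]$ and $v$ the generator of $L^3$ (that is, $v=x_4$ for $L_{4,3}$ and $v=x_5$ for $L_{5,5}$), the exterior-square identity $[a,b]\wedge c=a\wedge[b,c]-b\wedge[a,c]$ used in the proof of Lemma \ref{fg} gives $u\wedge v=x_1\wedge[x_2,v]-x_2\wedge[x_1,v]$; reading off the defining relations of $L_{4,3}$ and $L_{5,5}$ shows $[x_1,v]=[x_2,v]=0$, hence $u\wedge v=0$ and $(L\wedge L)^2=0$. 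So $L\wedge L$ is abelian, and Lemma \ref{j1} together with Proposition \ref{mul} gives $L_{4,3}\wedge L_{4,3}\cong A(2+2)=A(4)$ and $L_{5,5}\wedge L_{5,5}\cong A(4+2)=A(6)$.

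For the tensor squares of these last two, I would invoke \cite[Theorem 2.5]{ni5} to split $L\otimes L\cong(L\wedge L)\oplus(L\square L)$ and, exactly as in the proofs of Theorems \ref{j22} and \ref{jjk1}, identify $L\square L\cong A(\tfrac12 m(m+1))$ with $m=\dim L/L^2=2,3$; this contributes $A(3)$ and $A(6)$, giving $L_{4,3}\otimes L_{4,3}\cong A(7)$ and $L_{5,5}\otimes L_{5,5}\cong A(12)$. The main obstacle is precisely the class-three step: it cannot be delegated to Theorem \ref{jjk1} and has to be verified by hand from the explicit brackets, though as shown above this is only a one-line computation for each of the two algebras.
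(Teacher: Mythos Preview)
Your proposal is correct and follows essentially the same route as the paper: for the class-two algebras (a)--(d) you combine Proposition \ref{mul}, Lemma \ref{fg} and Theorem \ref{j22}, and for the class-three algebras (e),(f) you use Lemma \ref{j2222} to show $(L\wedge L)^2=0$ and then conclude via Proposition \ref{mul} and the $L\square L$ computation underlying Theorem \ref{j22}. Your explicit verification that $u\wedge v=0$ from the identity $[x_1,x_2]\wedge v=x_1\wedge[x_2,v]-x_2\wedge[x_1,v]$ and the centrality of $v\in L^3$ is exactly the content the paper leaves implicit when it writes ``Lemma \ref{j2222} implies $(L_{4,3}\wedge L_{4,3})^2=0$ and $(L_{5,5}\wedge L_{5,5})^2=0$''.
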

\begin{proof}
In the cases $ (a),(b), (c),(d), $ the result follows from
Proposition \ref{mul}, Lemma
\ref{fg} and Theorem \ref{j22}.
In the cases $ (e),(f), $ Lemma \ref{j2222} implies
$ (L_{4,3} \wedge L_{4,3} )^2=0$ and $ (L_{5,5} \wedge L_{5,5} )^2=0.$ Hence
the result follows from using  Proposition \ref{mul} and Theorem
\ref{j22}.
\end{proof}
\begin{thm}\label{kkk}
Let $ L $ be anon-abelian capable $ n$-dimensional  nilpotent Lie algebra  such that $\dim L^2\leq 2.$  Then
\begin{itemize}

 \item[$(i)$] If $\dim L^2=1,$ then $ L\cong H(1)\oplus A(n-3), $ $\mathcal{M}(L)\cong  A(\dfrac{1}{2} (n-1)(n-2)+1), t(L)=n-2$,
 $ L\wedge L\cong A(\dfrac{1}{2} (n-1)(n-2)+2) $ and $ L\otimes L\cong A(n^2-2n+3). $
 \item[$(ii)$]If $\dim L^2=2$ and $cl(L)=2.$ Then one of the following cases holds
  \begin{itemize}
 \item[(a)]$ L\cong  L_{5,8}\oplus A(n-5), $ $\mathcal{M}(L)\cong A(\dfrac{1}{2} n(n-5)+6), t(L)=2n-6$, $ L\wedge L\cong A(\dfrac{1}{2} (n-2)(n-3)+5) $ and $ L\otimes L\cong A(n^2-4n+9). $
\item[(b)]$ L\cong L_{6,22}(\epsilon)\oplus A(n-6),$ $\mathcal{M}(L)\cong A(\dfrac{1}{2} (n+1)(n-6)+8),t(L)=2n-5$, $L \wedge L \cong A(\dfrac{1}{2} (n+1)(n-6)+10) $ and $ L\otimes L\cong A(n^2-4n+8). $
\item[(c)] $ L\cong L_{6,7}^{(2)}(\eta)\oplus A(n-6),$ $\mathcal{M}(L)\cong A(\dfrac{1}{2} (n+1)(n-6)+8),t(L)=2n-5$, $L \wedge L \cong A(\dfrac{1}{2} (n+1)(n-6)+10) $ and $ L\otimes L\cong A(n^2-4n+8). $
\item[(d)]$ L\cong  L_1\oplus A(n-7), $ $\mathcal{M}(L)\cong A(\dfrac{1}{2} (n+2)(n-7)+9),t(L)=2n-2$, $ L\wedge L\cong A(\dfrac{1}{2}(n+2)(n-7)+11) $ and $ L\otimes L\cong A(n^2-4n+5 ). $
 \end{itemize}
 \item[(iii)]If $\dim L^2=2$ and $cl(L)=3.$ Then one of the following cases holds
 \begin{itemize}
 \item[$  (a)$]$ L\cong L_{4,3}\oplus  A(n-4), $ $\mathcal{M}(L)\cong A(\dfrac{1}{2} (n-1)(n-4)+2),t(L)=2n-4,$ $L\wedge L \cong A(\dfrac{1}{2}(n-1)(n-4)+4)$ and $ L\otimes L \cong A(n^2-4n+7).$
  \item[$  (b)$] $L\cong L_{5,5}\oplus  A(n-5), $ $\mathcal{M}(L)\cong A(\dfrac{1}{2} n (n-5)+4),t(L)=2n-4$, $ L\wedge L\cong A(\dfrac{1}{2}n (n-5)+6)$ and
 $ L\otimes L\cong A(n^2-4n+7).$
 \end{itemize}
\end{itemize}
\end{thm}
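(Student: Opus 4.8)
The plan is to derive everything from the classification in Theorem \ref{kkkkkk}: a non-abelian capable nilpotent Lie algebra $L$ with $\dim L^2\leq 2$ is, up to isomorphism, one of the seven direct sums $K\oplus A(n-d)$ with $(K,d)$ one of $(H(1),3)$, $(L_{5,8},5)$, $(L_{6,7}^{(2)}(\eta),6)$, $(L_{6,22}(\epsilon),6)$, $(L_1,7)$, $(L_{4,3},4)$, $(L_{5,5},5)$. So the task splits into computing $\mathcal{M}(L)$, $t(L)$, $L\wedge L$ and $L\otimes L$ for each of these, and the whole argument is essentially bookkeeping over the invariants of the blocks $K$ already recorded in Lemma \ref{ss}, Proposition \ref{mul} and Theorem \ref{mull}, together with the direct-sum formulas of Lemmas \ref{1} and \ref{15555}.

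First I would compute the Schur multiplier. One has $\dim K^{(ab)}=d-\dim K^2$, which equals $2$ for $K=H(1)$ and $d-2$ in the six remaining cases. Lemma \ref{1} then gives
\[\dim\mathcal{M}(L)=\dim\mathcal{M}(K)+\frac{1}{2}(n-d)(n-d-1)+\bigl(\dim K^{(ab)}\bigr)(n-d),\]
where $\dim\mathcal{M}(K)$ is taken from Lemma \ref{ss}$(ii)$ for $H(1)$ and from Proposition \ref{mul} otherwise, and $\dim\mathcal{M}(A(n-d))=\frac{1}{2}(n-d)(n-d-1)$ from Lemma \ref{ss}$(i)$. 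In each case the last two summands collapse to $\frac{1}{2}(n-d)\bigl(n-d-1+2\dim K^{(ab)}\bigr)$, and inserting the numerical data yields the stated formulas for $\dim\mathcal{M}(L)$ (for $K=H(1)$ this also follows from Proposition \ref{d}$(i)$). The corank is then immediate from $t(L)=\frac{1}{2}n(n-1)-\dim\mathcal{M}(L)$, with a one-line simplification giving the listed value in each case.

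Next I would treat the exterior square. For the five class-two cases, i.e. $K\in\{H(1),L_{5,8},L_{6,7}^{(2)}(\eta),L_{6,22}(\epsilon),L_1\}$, the algebra $L$ itself has class $2$, so Lemma \ref{fg} gives $L\wedge L\cong\mathcal{M}(L)\oplus L^2$ at once. For the two class-three cases, $K\in\{L_{4,3},L_{5,5}\}$, Theorem \ref{mull}$(e)$--$(f)$ shows $K\wedge K$ is abelian; since $A(n-d)\wedge A(n-d)\cong\mathcal{M}(A(n-d))$ is abelian and the mixed term in Lemma \ref{15555} is abelian, $L\wedge L$ is abelian, and then Lemma \ref{j1} again gives $L\wedge L\cong\mathcal{M}(L)\oplus L^2$. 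Hence in all seven cases $L\wedge L$ is abelian of dimension $\dim\mathcal{M}(L)+\dim L^2$, which is exactly the claimed value.

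Finally, for the tensor square I would use the splitting $L\otimes L\cong(L\wedge L)\oplus(L\square L)$ with $L\square L\cong A\bigl(\frac{1}{2}m(m+1)\bigr)$ and $m=\dim L/L^2$: for the class-two cases this is Theorem \ref{j22}, and for $K\in\{L_{4,3},L_{5,5}\}$ one argues as in the proof of Theorem \ref{jjk1} that $L\square L\cong L/L^2\square L/L^2\cong A\bigl(\frac{1}{2}m(m+1)\bigr)$ with $m=n-2$. Adding $\frac{1}{2}m(m+1)$ to $\dim\mathcal{M}(L)+\dim L^2$ and simplifying produces the listed tensor-square dimensions. The only points that need any care are the abelianness of $L\wedge L$ in the two class-three cases and the reduction of $L\square L$ to $L/L^2\square L/L^2$ there; otherwise the main obstacle is purely organizational, namely keeping the seven cases and their arithmetic straight, since all the substantive input (Theorems \ref{kkkkkk} and \ref{mull} and Proposition \ref{mul}) is already available.
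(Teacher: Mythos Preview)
Your proposal is correct and follows essentially the same route as the paper: classify via Theorem \ref{kkkkkk}, then feed the block data from Lemma \ref{ss}, Proposition \ref{mul} and Theorem \ref{mull} into the direct-sum formulas of Lemmas \ref{1} and \ref{15555}, and finish with Theorem \ref{j22}. The only minor difference is organizational: for cases $(ii)$ and $(iii)$ the paper computes $L\wedge L$ directly from Lemma \ref{15555} and the precomputed $K\wedge K$ in Theorem \ref{mull}, whereas you first establish $\dim\mathcal{M}(L)$ and then recover $L\wedge L\cong\mathcal{M}(L)\oplus L^2$ via abelianness; and in the class-three cases you are a bit more explicit than the paper about why the $L\otimes L\cong(L\wedge L)\oplus A\bigl(\tfrac{1}{2}m(m+1)\bigr)$ splitting still applies despite the ``class two'' hypothesis stated in Theorem \ref{j22} (the paper simply invokes Theorem \ref{j22} in all cases, as it already did in the proof of Theorem \ref{mull}$(e)$--$(f)$).
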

\begin{proof}

 The case $(i),  $ Theorem \ref{kkkkkk}   $(ii) $ implies  $ L\cong H(1)\oplus A(n-3).$ The result follows from Proposition \ref{d}, Lemma \ref{fg} and Theorem \ref{j22}.\\
In the cases $(ii)$  and $(iii),  $  Theorem \ref{kkkkkk}  $(iii)$  and $(iv)  $ implies $ L\cong L_{5,8} \oplus A(n-5),$ $ L\cong L_{6,7}^{(2)}(\eta) \oplus A(n-6),$ $ L\cong L_{6,22}(\epsilon) \oplus A(n-6),$  $L\cong L_1\oplus A(n-7),$
  $L\cong L_{4,3}\oplus  A(n-4) $ or  $L\cong L_{5,5}\oplus  A(n-5).$ The rest of proof is obtained from
 Lemmas \ref{1}, \ref{15555}, Theorems \ref{j22} and \ref{mull}.
\end{proof}
 The functors, such as the Schur multiplier, the exterior square and the tensor square of all non-abelian nilpotent  Lie algebras with the derived subalgebra of dimension at most $ 2$ are given. When $L$ ia abelian, we have
\begin{thm}
Let $L$ be an abelian Lie algebra of dimension $n$. Then $\mathcal{M}(L)\cong L\wedge L\cong A(1/2n(n-1))$, $t(L)=0$, and
$L\otimes L\cong A(n^2)$.
\end{thm}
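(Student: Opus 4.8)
The plan is to treat this as the degenerate case of the machinery already developed, exploiting that an abelian Lie algebra $L\cong A(n)$ has $L^2=0$. First I would record $\dim\mathcal{M}(L)=\frac12 n(n-1)$ directly from Lemma \ref{ss}$(i)$. Since $L^2=0$, the commutator map $\kappa'$ of Lemma \ref{j1} is the zero map, so the exact sequence $0\to\mathcal{M}(L)\to L\wedge L\xrightarrow{\kappa'} L^2\to 0$ collapses to an isomorphism $\mathcal{M}(L)\cong L\wedge L$. By Lemma \ref{j2222}, $(L\wedge L)^2=\langle l\wedge l'\mid l,l'\in L^2\rangle=0$, so $L\wedge L$ is an abelian Lie algebra; together with $\dim(L\wedge L)=\dim\mathcal{M}(L)=\frac12 n(n-1)$ this gives $L\wedge L\cong A(\frac12 n(n-1))$. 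The corank is then immediate from the definition $t(L)=\frac12 n(n-1)-\dim\mathcal{M}(L)$, which equals $0$.

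For the tensor square I would apply Theorem \ref{j22} with $m=\dim L/L^2=n$, noting that an abelian Lie algebra is in particular nilpotent of class at most two: this yields $L\otimes L\cong (L\wedge L)\oplus A(\frac12 m(m+1))\cong A(\frac12 n(n-1))\oplus A(\frac12 n(n+1))\cong A\big(\frac12 n(n-1)+\frac12 n(n+1)\big)=A(n^2)$. As an independent sanity check one may observe that for abelian $L$ the nonabelian tensor square degenerates to the ordinary tensor product $L\otimes_F L$ with trivial bracket, which has dimension $n^2$.

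There is essentially no hard step here; the only point meriting a line of care is that the hypothesis ``class two'' appearing in Lemma \ref{fg} and Theorem \ref{j22} is used there only through $(L\wedge L)^2=0$ and the structure of $L/L^2\square L/L^2$, both of which hold a fortiori when $L$ is abelian, so the cited results apply verbatim. Thus the proof is a short assembly of Lemmas \ref{ss}, \ref{j1} and \ref{j2222} together with Theorem \ref{j22}.
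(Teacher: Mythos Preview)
Your argument is correct and follows the same route as the paper, which also rests on Lemmas~\ref{ss} and~\ref{j1}; in fact your write-up is more complete, since you explicitly invoke Lemma~\ref{j2222} to justify that $L\wedge L$ is abelian and Theorem~\ref{j22} to obtain $L\otimes L\cong A(n^2)$, whereas the paper's one-line proof leaves the tensor-square computation implicit. Your remark that the ``class two'' hypothesis in Theorem~\ref{j22} is harmless here is well placed and correct.
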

\begin{proof} The result is obtained by using  Lemmas \ref{ss} and  \ref{j1}.
\end{proof}


\end{document}